\documentclass[11pt,psamsfonts]{amsart}
\usepackage{amsmath}
\usepackage{amsthm}
\usepackage{amssymb}
\usepackage{amscd}
\usepackage{amsfonts}
\usepackage{amsbsy}
\usepackage{graphicx}
\usepackage[dvips]{psfrag}
\usepackage{array}
\usepackage{color}
\usepackage{epsfig}
\usepackage{url}

\setlength{\textwidth}{5.32in}

\newcommand{\R}{\ensuremath{\mathbb{R}}}

\newtheorem {theorem} {Theorem}

\newtheorem {lemma}  [theorem]{Lemma}

\numberwithin{equation}{section}

%\title[Global dynamics of planar quasi--homogeneous systems]
%{Global Dynamics of Planar Quasi--homogeneous Differential Systems}

\title[Dynamics and unfolding of piecewise  quasi--homogeneous  system]
{Global dynamics and unfolding of planar piecewise smooth quadratic quasi--homogeneous differential systems}

%\author[ V.G. ROMANOVSKI,  Y. TANG]{  Valery G. Romanovski$^{1,2,3,4}$,   Yilei  Tang$^{5,4}$}

\author[ Y. TANG]{ Yilei  Tang$^{1, 2}$}

%\address{$^1$
%Department of Mathematics, Shanghai Normal University, Shanghai, 200234, P.R. China}
%\email{Valery.Romanovski@uni-mb.si (V.G. Romanovski)}
%
%\address{$^2$ Faculty of Electrical Engineering and Computer Science, University of Maribor, Smetanova 17,  Maribor, SI-2000 Maribor, Slovenia}
%
%\address{$^3$ Faculty of Natural Science and Mathematics, University of Maribor, Koro\v ska c.160, Maribor, SI-2000 Maribor, Slovenia}

\address{$^1$ Center for Applied Mathematics and Theoretical Physics, University of Maribor, Krekova 2, Maribor,  SI-2000 Maribor, Slovenia}

\address{$^2$ School of Mathematical Science, Shanghai Jiao Tong University, Dongchuan Road 800, Shanghai, 200240, P.R. China}
\email{mathtyl@sjtu.edu.cn} %(Y. Tang)

\subjclass[2010]{Primary: 37G05, Secondary: 37G10, 34C23, 34C20}

\keywords{Quasi--homogeneous polynomial systems;   global phase portrait;  bifurcation of piecewise  system; Melnikov function.}

\begin{document}

\begin{abstract}
 In this paper we research  global dynamics and bifurcations of  planar piecewise smooth quadratic quasi--homogeneous
 but non--homogeneous polynomial differential systems.
We  present sufficient and necessary conditions
 for  the existence of a center in  piecewise smooth quadratic  quasi--homogeneous  systems. Moreover, the center is global and non--isochronous if it exists,
which cannot appear in smooth quadratic  quasi--homogeneous  systems.
Then the global structures  of piecewise smooth quadratic  quasi--homogeneous but non--homogeneous  systems are studied.
Finally we investigate limit cycle bifurcations of the piecewise smooth quadratic quasi--homogeneous center and
give  the  maximal number of limit cycles bifurcating from the periodic orbits of the center
by applying the  Melnikov method for piecewise smooth near-Hamiltonian systems.

\end{abstract}

\maketitle

\section{ Introduction }\label{s1}

Since Andronov {\it et al} \cite{AVK} researched the properties of solutions of piecewise linear differential
systems,  there are lots of works in mechanics, electrical engineering and the theory of automatic control which are described
by non-smooth systems; see for  the works of Filippov \cite{Filippov},  di Bernardo {\it et al} \cite{BBCK},
Makarenkov and Lamb \cite{ML} and the references therein.

For the planar piecewise smooth linear differential systems  separated by a straight line,
%Han and Zhang
\cite{Buz, HZ, LP} studied the  systems having two or three limit
cycles respectively.
%and conjectured that the maximum number of limit cycles of such class of systems is two.
%A numerical example of one of those differential systems having three limit cycles was provided  in \cite{HY}.
%Later on it was proved in \cite{Buz, LP} that  three limit cycles can  exist in  discontinuous piecewise linear  differential systems with two zones separated by a straight line.
More investigations of limit cycle bifurcations from linear  piecewise
differential systems can be seen in \cite{Freire12, GP}. %Freire12,
%On the other side,  Freire, Ponce and Torres \cite{Freire12} discussed crossing limit cycles of linear
%differential systems separated by a straight line and can get exactly two crossing limit cycles in certain parameter regions.
%Giannakopoulos  and Pliet \cite{GP} researched the bifurcations of equilibria and limit cycles in   $\mathbb{Z}_2$
%-symmetry piecewise linear differential equations  with a line of discontinuity.
%%%%
% Guardia,  Seara and Teixera \cite{GST} researched  the generic bifurcations of  planar Filippov systems and
%classified codimension-$2$ singularities of such systems.
%Freire, Ponce and Torres \cite{Freire15} characterized the critical crossing cycle bifurcation for planar piecewise smooth differential systems with a discontinuity line.
The discussion of limit cycle bifurcations  in nonlinear piecewise differential equations has also been researched in many works;
see for instance    \cite{BBCKNTP, ChenR2015, LiaHR2012, WZ2016, ZKB2006}.  %LiangH2013,

However, there are seldom works giving completely global dynamics of  piecewise smooth  nonlinear differential systems.
Even for smooth polynomial differential systems there are only few classes   whose global structures were completely characterized,
as shown in  \cite{DumoLA2006, Reyn2007}.
%This work will focus on the global structure and bifurcation of quadratic piecewise smooth quasi--homogeneous polynomial differential systems.

A real planar polynomial differential system
\begin{eqnarray}  \label{quasi5-general}
 \dot{x}¢« = P(x, y), \qquad\qquad  \dot{y}¢« = Q(x, y),
\end{eqnarray}
is called a {\it quasi--homogeneous
polynomial differential system} if there exist constants
$s_1,s_2,d\in \mathbb{Z}_+$ such that for an arbitrary
$\alpha\in
\R _+$ it holds that
\begin{equation*}  \label{alphaPQ}
P(\alpha^{s_1}x, \alpha^{s_2}y) =
\alpha^{s_1+d-1} P(x, y), \ \  \ \  Q(\alpha^{s_1}x, \alpha^{s_2}y)=
\alpha^{s_2+d-1}  Q(x, y),
\end{equation*}
where  $PQ \not\equiv 0$, $P(x, y), Q(x, y) \in \R [x, y]$, $\mathbb{Z}_+$ is the set of positive integers and $\mathbb R_+$
is the set of positive real numbers. We denominate
% $(s_1,s_2)$ {\it weight exponents} of system (\ref{quasi5-general}) and $d$ {\it weight degree} with respect to the weight exponents. Moreover,
$w =(s_1, s_2, d)$ the {\it weight vector} of system (\ref{quasi5-general}) or of its associated vector field.
When $s_1=s_2=1$, system (\ref{quasi5-general}) is a homogeneous one of degree $d$.
Clearly, quasi--homogeneous  system (\ref{quasi5-general})  has a unique  {\it minimal weight vector} (MWF for short)
$ \widetilde{w}=(\widetilde{s}_1 , \widetilde{s}_2, \tilde{d})$  satisfying  $\tilde{s}_1\le s_1, \tilde{s}_2\le s_2$ and $\tilde{d}\le d$
for any other weight  vector $(s_1, s_2, d)$ of system (\ref{quasi5-general}).
We say that  system (\ref{quasi5-general}) has {\it degree $n$} if $n=\max\{\mbox{deg}\,P,\mbox{deg}\, Q\}$.
In what follows we assume without loss of generality
that $P$ and $Q$ in system \eqref{quasi5-general} have not a non--constant common factor.

%the origin $O=(0, 0)$  is a singularity of system \eqref{quasi5-general}.
%As usual, the dot denotes derivative with respect to an independent real variable $t$ and
%%$\mathbb{R}[x, y]$ denotes the ring of polynomials in the variables $x$ and $y$ with coefficients in $\R$.
%

Smooth Quasi--homogeneous polynomial differential systems have  been
intensively studied by a great deal of authors  from different views.
We refer readers  to see for example the  integrability
\cite{Alga2009,   GGL2013, Gori1996, Hu2007,  Llib2002},
  the centers and limit cycles  \cite{Alga2012, Gavr2009, GinGL2015, Li2009},
 the  algorithm to compute quasi--homogeneous   systems with a given degree \cite{Garc2013},
the characterization of centers or topological phase portraits for quasi--homogeneous equations of degrees $3$-$5$ respectively
\cite{Aziz2014, Liang2014, TWZ2015}  and the references therein.

A real planar piecewise smooth polynomial differential system
\begin{eqnarray}  \label{quasi-piece}
\begin{array}{ll}
 \dot{x}¢« = P^+(x, y), \qquad  \dot{y}¢« = Q^+(x, y), \qquad y\ge 0,
 \\
 \dot{x}¢« = P^-(x, y), \qquad \dot{y}¢« = Q^-(x, y), \qquad y<0
\end{array}
\end{eqnarray}
is called a {\it piecewise smooth quasi--homogeneous
polynomial differential system} with two zones separated by the $x$-axis if both $(P^+(x, y), Q^+(x, y))$
and $(P^-(x, y)$, $Q^-(x, y))$  are quasi--homogeneous
polynomial vector fields.

In this paper we research the global dynamics and bifurcations of all piecewise smooth quadratic quasi--homogeneous but non--homogeneous  differential systems.
%Until now the topological phase portraits of piecewise smooth quadratic quasi--homogeneous   differential systems have not been obtained.
 First the existence of a global and non-isochronous center at the origin
 of  piecewise smooth quadratic quasi--homogeneous but non--homogeneous  systems is proved.
Notice that the origin of smooth quadratic quasi--homogeneous   systems
cannot be a center. % since there exists an invariant curve passing through the origin.
 Then  we  characterize the global phase portraits of  piecewise smooth quadratic  quasi--homogeneous but non--homogeneous polynomial
vector fields. At last we perturb the piecewise smooth quadratic quasi--homogeneous system at the  center
by  generic  piecewise   polynomials of degree $n$,
and determine the maximal number of limit cycles bifurcating from the periodic orbits of the center
by using the first order  Melnikov function.

This article is organized as follows. In section \ref{s2}
we  prove that only one class of  piecewise smooth quadratic quasi--homogeneous but non--homogeneous differential systems has a center
  at the origin, and it is global and non-isochronous if it exists. Section \ref{s3} will concentrate on global structures and phase portraits of
piecewise  smooth quadratic quasi--homogeneous but non--homogeneous differential systems. The unfoldings and bifurcations of these systems are investigated
 for some critical values of parameters.
The last section  is devoted to   the limit cycle  bifurcations from the periodic orbits
of the piecewise  smooth quadratic quasi--homogeneous center.

%%%%%%%%%%%%%%%%%% Fig_global
%\begin{figure}[ht]
%\begin{center}
%\begin{tabular}{cc}
%\psfrag{I}{$I_u$}
%\psfrag{O}{$O$}\psfrag{X}{$x$} \psfrag{Y}{$y$}
%\includegraphics[scale=.45]{Fig_global.eps}
%\end{tabular}
%\end{center}
%\caption{Global phase portrait of system (\ref{quasi5-sys-center}). }
%\label{Fig_global}
%\end{figure}

%%%%%%%%%%%%%%%%%%%%%%%%%%%%%

\bigskip

%%%%%%%%%%%%%%%%%%%%%%%%%%%%%%%%%%%%%%%%%%%%

\section{Center  of piecewise smooth quadratic quasi--homogeneous  systems} \label{s2}

%\setcounter{equation}{0}

%\subsection{Piecewise quadratic quasi--homogeneous  systems}

Due to Proposition 17  of Garc\'ia, Llibre and P\'erez del R\'io \cite{Garc2013}, a smooth quasi-homogeneous but non-homogeneous quadratic system  has one of the following three forms:
\begin{align*}
& (i): ~ \dot{x}=a_1 y^2,\quad \dot{y}=b_1 x  ~\mbox{ with MWV}~ (3,2,2) ~\mbox{and}~  a_1b_1 \ne 0,
\\
& (ii): ~  \dot{x}=a_2 x y,\quad \dot{y}=b_{21} x+ b_{22} y^2 ~\mbox{ with MWV} ~(2,1,2) ~\mbox{and}~ a_2b_{21}b_{22} \ne 0,
\\
& (iii): ~  \dot{x}=a_{31}x+a_{32} y^2,\quad \dot{y}=b_3 y ~\mbox{ with MWV} ~(2,1,1) ~\mbox{and}~ a_{31} a_{32}b_3 \ne 0.
\end{align*}

Thus after taking appropriate linear changes of variable  $x$ together with a time  scaling,   we have three totally reduced
piecewise smooth quasi-homogeneous but non-homogeneous quadratic systems.

\begin{lemma}
\label{lm-forms}
Every planar piecewise smooth  quasi-homogeneous but non-homogeneous quadratic system is one of the following three systems:
\begin{equation*}
 \begin{array}{llll}
& (I): ~   &\dot{x}=a_1 y^2,\quad \dot{y}=b_1 x  &~\mbox{ if }~ y\ge 0,
\\
& &   \dot{x}=\tilde{a}_1 y^2,\quad \dot{y}= x  &~\mbox{ if }~ y < 0;
\\
& (II): ~  &\dot{x}=a_2 x y,\quad \dot{y}=b_{21} x+ b_{22} y^2 &~\mbox{ if } ~  y\ge 0,
\\
& &\dot{x}=\tilde{a}_2 x y,\quad \dot{y}=  x+  y^2 &~\mbox{ if } ~  y< 0;
\\
& (III): ~ & \dot{x}=a_{31}x+a_{32} y^2,\quad \dot{y}=b_3 y &~\mbox{ if } ~ y\ge 0,
 \\
& & \dot{x}=\tilde{a}_{31}x+ y^2,\quad \dot{y}=y &~\mbox{ if } ~ y< 0,
\end{array}
\end{equation*}
where all parameters cannot be zero.
\end{lemma}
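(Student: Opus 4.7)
The plan is to apply Proposition~17 of Garc\'ia, Llibre and P\'erez del R\'io \cite{Garc2013} separately to the upper and lower polynomial vector fields of \eqref{quasi-piece}, and then exhaust the admissible coordinate changes that preserve the switching line to reach the three normal forms. By the definition of a piecewise smooth quasi--homogeneous polynomial system, each of $(P^{+},Q^{+})$ and $(P^{-},Q^{-})$ is itself a smooth quadratic quasi--homogeneous but non--homogeneous polynomial vector field, so by Proposition~17 each half, taken on its own, has one of the three structural shapes $(i)$, $(ii)$, $(iii)$, distinguished respectively by their MWVs $(3,2,2)$, $(2,1,2)$, $(2,1,1)$.

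Next I would pair the two halves. Since each half is individually of one of three types, there are nine a~priori combinations; the lemma asserts that only the three diagonal pairs $\{(i),(i)\}$, $\{(ii),(ii)\}$, $\{(iii),(iii)\}$ occur, consistent with the convention (implicit here) that a piecewise quasi--homogeneous system carries a common minimal weight vector on both sides of the switching line. The admissible coordinate changes compatible with \eqref{quasi-piece} are precisely $x\mapsto \lambda x$ with $\lambda\ne 0$, $y\mapsto \mu y$ with $\mu>0$, and $t\mapsto \nu t$ with $\nu\ne 0$; any other linear substitution would either mix the half-planes or move the switching manifold off the $x$-axis.

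In each of the three cases I would then exploit these scaling freedoms to bring the lower half into reduced form: a single time rescaling suffices in $(I)$ to fix $\tilde b_1=1$, whereas in $(II)$ and $(III)$ an $x$-scaling combined with a time rescaling simultaneously normalizes the two ``tildeless'' coefficients of the lower half to $1$. The non--homogeneity assumption on each half then propagates to the stated non--vanishing of $a_1b_1$, $a_2b_{21}b_{22}$, $a_{31}a_{32}b_3$, $\tilde a_1$, $\tilde a_2$, $\tilde a_{31}$. The main obstacle is the pairing step: one should either rule out the six mixed combinations by a separate argument or adopt the common--MWV convention as part of the definition of piecewise quasi--homogeneous system; once that is settled, the remaining work is a routine computation, the only delicate point being sign tracking to guarantee that $\mu$ can be chosen positive.
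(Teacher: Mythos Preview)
Your approach is essentially the paper's: it too passes from the three smooth shapes $(i)$--$(iii)$ on each half-plane to the piecewise forms and then normalizes the lower-half coefficients by an $x$-scaling combined with a time rescaling (no $y$-scaling is used), spelling out the explicit substitutions $(x,y,dt)\to(x,y,\tilde b_1\,dt)$, $(x,y,dt)\to(\tilde b_{21}x/\tilde b_{22},\,y,\,\tilde b_{22}\,dt)$, $(x,y,dt)\to(\tilde b_3 x/\tilde a_{32},\,y,\,\tilde b_3\,dt)$. Your caveat about the pairing step is well taken---the paper simply writes down the diagonal pairs $(a)$, $(b)$, $(c)$ without discussing why mixed combinations are excluded, so the common-MWV convention you flag is indeed being used implicitly.
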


\begin{proof}
From the transformation  $(x, y, dt) \to (x,y, \tilde{b}_1 dt)$, $(x, y, dt) \to ( \tilde{b}_{21}x
/\tilde{b}_{22}$, $y, \tilde{b}_{22}dt)$
  and $(x, y, dt) \to (\tilde{b}_3 x/\tilde{a}_{32}, y, \tilde{b}_3 dt)$, planar piecewise smooth quadratic quasi-homogeneous but non-homogeneous systems
\begin{equation*}
 \begin{array}{llll}
& (a): ~   &\dot{x}=a_1 y^2,\quad \dot{y}=b_1 x  &~\mbox{ if }~ y\ge 0,
\\
& &   \dot{x}=\tilde{a}_1 y^2,\quad \dot{y}=\tilde{b}_1 x  &~\mbox{ if }~ y < 0;
\\
& (b): ~  &\dot{x}=a_2 x y,\quad \dot{y}=b_{21} x+ b_{22} y^2 &~\mbox{ if } ~  y\ge 0,
\\
& &\dot{x}=\tilde{a}_2 x y,\quad \dot{y}= \tilde{b}_{21}  x+ \tilde{b}_{22} y^2 &~\mbox{ if } ~  y< 0;
\\
& (c): ~ & \dot{x}=a_{31}x+a_{32} y^2,\quad \dot{y}=b_3 y &~\mbox{ if } ~ y\ge 0,
 \\
& & \dot{x}=\tilde{a}_{31}x+\tilde{a}_{32} y^2,\quad \dot{y}=\tilde{b}_3 y &~\mbox{ if } ~ y< 0,
\end{array}
\end{equation*}
are changed into systems $(I)$, $(II)$ and $(III)$ respectively, where we still write $a_1/\tilde{b}_1$, $b_1/\tilde{b}_1$,
$\tilde{a}_1/\tilde{b}_1$, $a_2/\tilde{b}_{22}$, $b_{21}/\tilde{b}_{21}$, $b_{22}/\tilde{b}_{22}$, $\tilde{a}_2/\tilde{b}_{22}$,
$a_{31}/\tilde{b}_3$,  $a_{32}/\tilde{a}_{32}$, $b_3/\tilde{b}_3$ and $\tilde{a}_{31}/\tilde{b}_3$
as $a_1$, $b_1$, $\tilde{a}_1$, $a_2$, $b_{21}$, $b_{22}$, $\tilde{a}_2$, $a_{31}$,  $a_{32}$, $b_3$ and $\tilde{a}_{31}$  for simpler notations.
\end{proof}

%%%%%%%%%%%%%%%%%%%%%%%%%%%%%%%%%%%%%%%%%%%%%%%%%%

\medskip

%\subsection{Discontinuous lines $x=\pm\alpha$}

In the following, we briefly  present the Filippov convex method \cite{BBCK,Filippov,Kunze,KRG}
to study the dynamics of generic piecewise smooth quasi-homogeneous  system
\eqref{quasi-piece} close to the discontinuous line. %, the  $x$-axis.
This discontinuous line
\begin{eqnarray*}
\mathcal{L}:=\{(x,y)\in\mathbb{R}^2|~ F(x,y):=y=0\}
\end{eqnarray*}
 separates the  plane into two open nonoverlapping regions
\begin{eqnarray*}
Y^+=\{(x,y)\in\mathbb{R}^2| ~0<y\} ~~\mbox{and}~~Y^-=\{(x,y)\in\mathbb{R}^2|~ 0>y \}.
\end{eqnarray*}
%%%

Suppose that
\begin{eqnarray*}
\sigma(x,y)=\left<(F_x,F_y), (P^+, Q^+) \right> ~ \left<(F_x,F_y), (P^-, Q^-) \right>,
\label{sigma}
\end{eqnarray*}
where $<\cdot,\cdot>$ denotes the standard scalar product.
The {\it crossing set} can be defined by %By \cite{BBCK,KRG},
\begin{eqnarray}
\mathcal{L}_c:=\{(x,y)\in \mathcal{L}|~ \sigma(x,y)>0\},
\label{Lc}
\end{eqnarray}
indicating that at each point  of $\mathcal{L}_c$ the orbit of system \eqref{quasi-piece} crosses $\mathcal{L}$,
i.e., the orbit reaching $(x,y)$ from $Y^+$ (or $Y^-$) concatenates with the orbit entering $Y^-$
(or $Y^+$) from $(x,y)$.
%By \cite{BBCK,KRG},
The {\it sliding set} $\mathcal{L}_s$ is the complement of $\mathcal{L}_c$ in $\mathcal{L}$, which is defined as
\begin{eqnarray}
\mathcal{L}_s :=\{(x,y)\in \mathcal{L} | ~\sigma(x,y)\leq0\}.
\label{Ls}
\end{eqnarray}
Moreover, in $\mathcal{L}_s$ solving the equation
\begin{eqnarray}
\label{slid-point}
\left<(F_x, F_y),(P^--P^+, Q^--Q^+)\right>=0
\end{eqnarray}
we can obtain the singular  sliding points from the set of solutions.

Regarding to the piecewise smooth  system $(I)$, we can analyze that the crossing set and the sliding set in $\mathcal{L}$ are
\begin{eqnarray}
\label{I-c}
&\mathcal{L}_c^I=\{(x,y)\in \mathcal{L} | ~b_1 x^2>0\}=
\Big\{
\begin{array}{ll}
\{(x,y)\in \mathcal{L} | ~ x\ne 0\}  &~\mbox{if}~b_1>0,
\\
 \emptyset  &~\mbox{if}~b_1<0
\end{array}
 \end{eqnarray}
and
\begin{eqnarray}
& \mathcal{L}_s^I =\{(x,y)\in \mathcal{L} | ~b_1 x^2 \leq0\}=
\Big\{
\begin{array}{ll}
\{(x,y)\in \mathcal{L} | ~ x=0\}  &~\mbox{if}~b_1>0,
\\
 \mathcal{L}  &~\mbox{if}~b_1<0
\end{array}
\label{I-s}
 \end{eqnarray}
respectively by definitions \eqref{Lc} and \eqref{Ls}. Then, we find the only solution of \eqref{slid-point} for system $(I)$ in $\mathcal{L}_s^I$
is the origin, which is a singular sliding point and at the same time a boundary equilibrium because of the vanish of vector fields at the origin.

By an analogous computation of system  $(I)$, we have the crossing sets and the sliding sets in $\mathcal{L}$ of the forms
\begin{eqnarray}
\label{II-c}
&\mathcal{L}_c^{II}=\{(x,y)\in \mathcal{L} | ~b_{21} x^2>0\}=
\Big\{
\begin{array}{ll}
\{(x,y)\in \mathcal{L} | ~ x\ne 0\}  &~\mbox{if}~b_{21} >0,
\\
 \emptyset  &~\mbox{if}~b_{21}<0,
\end{array}
 %\end{equation}
%%and
%\begin{equation*}
\\
 & \mathcal{L}_s^{II} =\{(x,y)\in \mathcal{L} | ~b_{21} x^2 \leq0\}=
\Big\{
\begin{array}{ll}
\{(x,y)\in \mathcal{L} | ~ x=0\}  &~\mbox{if}~b_{21}>0 ,
\\
 \mathcal{L}  &~\mbox{if}~b_{21}<0
\end{array}
\label{II-s}
 \end{eqnarray}
 and
\begin{equation}
\label{III-cs}
 \mathcal{L}_c^{III}=  \emptyset, \qquad  \mathcal{L}_s^{III} =\mathcal{L}
\end{equation}
for  piecewise smooth  systems $(II)$ and $(III)$, respectively.
%%%%%
We find  the origin of system $(II)$ in $\mathcal{L}_s^{II}$
is a unique singular sliding point, which is a boundary equilibrium.
%%%
Moreover, the discontinuous line $\mathcal{L}$ is full of non-isolated singular sliding points for system $(III)$, since  equation
\eqref{slid-point} always holds  on the sliding set $ \mathcal{L}_s^{III}$.

%%%%%%%%%%%%%%%%%%%%%%%%%%%%

%\subsection{Centers  of piecewise smooth  quadratic quasi--homogeneous  systems}
\smallskip

Notice that all smooth quadratic quasi--homogeneous  systems $(i)-(iii)$  have no centers,  since there exists an invariant line or an invariant curve passing through
the origin of such  systems. However, for piecewise smooth  quadratic quasi--homogeneous  systems we will find
the existence of a center at the origin under some parameter conditions.

An equilibrium of the piecewise smooth   system \eqref{quasi-piece} is called a {\it center} if all solutions sufficiently closed to it are periodic.
% that is, all trajectories in a small neighborhood of the singularity are ovals.
If all  periodic solutions inside the period annulus of the center have the same period it is said that the center is {\it isochronous}.
A center is called a {\it global center } when the periodic orbits
surrounding the center fill the whole plain except the center
itself.

\begin{theorem}\label{th-center}
 Piecewise smooth  quadratic quasi--homogeneous  systems  $(II)$ and $(III)$ have no centers on the phase space.
 Piecewise smooth  quadratic quasi--homogeneous  system  $(I)$ has a center at the origin if and only if  $a_1<0, b_1>0$ and $\tilde{a}_1>0$,
 which is global but not isochronous.
\end{theorem}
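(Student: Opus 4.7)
The plan is to handle systems $(II)$ and $(III)$ with short invariance/crossing arguments, then tackle system $(I)$ by producing explicit first integrals on each half--plane and closing them up with the Filippov convention, and finally use a quasi--homogeneous scaling to rule out isochronicity.

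For system $(III)$, \eqref{III-cs} gives $\mathcal{L}_c^{III}=\emptyset$, so no orbit can cross the discontinuity line and no periodic orbit can encircle the origin. For system $(II)$ the same conclusion holds when $b_{21}<0$ by \eqref{II-c}. When $b_{21}>0$, observe that the $y$--axis $\{x=0\}$ is invariant for both half--systems, since $\dot{x}=a_2xy$ and $\dot{x}=\tilde{a}_2xy$ both vanish on $x=0$ and $a_2\tilde{a}_2\ne 0$; consequently the positive and negative $y$--semi--axes are themselves orbits, and any periodic orbit around the origin would have to cross $\{x=0\}$ transversely at some $(0,y^*)\ne(0,0)$, which is impossible. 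Hence neither $(II)$ nor $(III)$ admits a center.

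For system $(I)$, necessity of $b_1>0$ is immediate from \eqref{I-c}. Assuming $b_1>0$, a direct computation shows that $H^+(x,y)=\tfrac{b_1}{2}x^2-\tfrac{a_1}{3}y^3$ is a first integral of the upper half--system and $H^-(x,y)=\tfrac{1}{2}x^2-\tfrac{\tilde{a}_1}{3}y^3$ of the lower one. Because each $H^\pm$ is even in $x$, the level set $\{H^\pm=h\}$ restricted to the appropriate closed half--plane is a bounded arc symmetric under $x\mapsto -x$ exactly when the cubic $y$--term has sign opposite to the quadratic $x$--term in that half--plane; this forces $a_1<0$ and $\tilde{a}_1>0$. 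Conversely, under $a_1<0,\,b_1>0,\,\tilde{a}_1>0$, for every $x_0>0$ the arc $\{H^+=b_1 x_0^2/2\}\cap\{y\ge 0\}$ joins $(x_0,0)$ to $(-x_0,0)$ and the arc $\{H^-=x_0^2/2\}\cap\{y\le 0\}$ joins $(-x_0,0)$ back to $(x_0,0)$. Since $b_1>0$, the matching points $(\pm x_0,0)$ lie in $\mathcal{L}_c^I$ by \eqref{I-c}, and a sign check of $\dot{y}$ on either side shows the Filippov convention glues the two arcs into a genuine closed orbit. As $x_0$ ranges over $(0,\infty)$, these periodic orbits exhaust $\R^2\setminus\{(0,0)\}$, so the center is global.

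For non--isochronicity I would avoid the period integral and instead exploit the quasi--homogeneous structure. By Lemma \ref{lm-forms} both half--systems of $(I)$ share the MWV $(3,2,2)$, and the discontinuity line $y=0$ is preserved under the dilation $(x,y)\mapsto(\alpha^3x,\alpha^2y)$; hence the piecewise system as a whole is invariant under this dilation combined with the time rescaling $t\mapsto\alpha t$. If $T(x_0)$ denotes the period of the orbit through $(x_0,0)$, this symmetry gives $T(\alpha^3x_0)=\alpha^{-1}T(x_0)$ for every $\alpha>0$, so $T$ is a non--constant function of $x_0$ and the center is not isochronous. The step I expect to be most delicate is the matching in the previous paragraph: one must confirm that the Hamiltonian arcs in $Y^+$ and $Y^-$ are compact, meet $\mathcal{L}$ transversely at the expected crossing points, and are glued by the Filippov rule rather than absorbed into a sliding segment. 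Once the signs of $a_1,b_1,\tilde{a}_1$ are correctly pinned down this is routine, but it is the place where an oversight would most easily sink the argument.
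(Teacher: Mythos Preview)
Your argument is correct and tracks the paper closely for systems $(II)$ and $(III)$ and for the sufficiency direction of system $(I)$: both you and the paper use the invariant line $x=0$ for $(II)$, the empty crossing set for $(III)$, and the pair of first integrals $H^{\pm}$ together with the $x\mapsto -x$ symmetry to close up the orbits of $(I)$.

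There are two places where your route genuinely differs. For the \emph{necessity} of the sign conditions $a_1<0$, $\tilde a_1>0$, the paper argues via invariant curves through the origin (when $a_1b_1>0$ or $\tilde a_1<0$ the half--system has a separatrix $\tfrac{a_1}{3}y^3-\tfrac{b_1}{2}x^2=0$ or $\tfrac{\tilde a_1}{3}y^3-\tfrac12 x^2=0$ landing on the origin), whereas you argue via unboundedness of the level arcs of $H^{\pm}$. These are two faces of the same coin and equally short. For \emph{non--isochronicity}, the paper computes the period integral explicitly and obtains $T_I(r_0)=\beta_0\,r_0^{-1/3}$, while you use the quasi--homogeneous dilation $(x,y,t)\mapsto(\alpha^3x,\alpha^2y,\alpha^{-1}t)$, which preserves the discontinuity line and both half--systems, to deduce the functional equation $T(\alpha^3x_0)=\alpha^{-1}T(x_0)$. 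Your scaling argument is cleaner, avoids the Gamma--function computation entirely, and in fact recovers the paper's formula (set $x_0=1$ to get $T(r)=T(1)\,r^{-1/3}$); the paper's explicit calculation, on the other hand, pins down the constant $\beta_0$, which your method does not. One small point worth making explicit in your write--up: the scaling argument needs $0<T(x_0)<\infty$ for at least one $x_0$, which follows because the closed orbit is compact and the (Filippov) vector field is continuous and nowhere zero along it.
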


\begin{proof}
Notice that no equilibria of piecewise smooth systems $(I)$-$(III)$ exist in the regions $Y^{\pm}$.
From above mentioned analysis of sliding sets and singular sliding points, on the discontinuous line $\mathcal{L}$ systems $(I)$ and $(II)$
 have a unique singular sliding point at the origin, and $\mathcal{L}$ is full of non-isolated singular sliding points for system $(III)$.
Thus,  system $(III)$ has no centers on the plane.
It is easy to see that  system   $(II)$  has an invariant line $x=0$ passing through its origin,
%and invariant curve $x+\frac{a_{32}}{a_{31}-2b_3} y^2=0$  as $a_{31}-2b_3\ne0$ respectively,
yielding that the origin cannot be a center.
% When  $a_{31}-2b_3=0$, the origin of smooth system   $(iii)$ is a node,  and then system $(III)$ has an invariant curve passing through its origin in the region $y>0$,
% implying that the origin of system $(III)$ cannot be a center in this case.
We only need to check whether the origin of system $(I)$ can be a center.

The corresponding smooth quadratic quasi--homogeneous  system  $(i)$ of piecewise smooth system $(I)$  has a double vanished eigenvalue and
by \cite[Theorem 3.5]{DumoLA2006}
%or \cite[Chapter 2, Theorem 7.3]{ZZFD}????,
the equilibrium at the origin is a cusp. Moreover, when $a_1b_1>0$  the piecewise smooth system $(I)$
has an invariant curve $\frac{a_1}{3}y^3 -\frac{b_1}{2} x^2=0$ passing through the origin $O_1:(0,0)$ in the half plane $Y^+$, and
when $\tilde{a}_1<0$  system $(I)$
has an invariant curve $\frac{\tilde{a}_1}{3}y^3 -\frac{1}{2} x^2=0$ passing through the origin $O_1$ in the half plane $Y^-$.
Therefore, only when a crossing  deleted neighborhood $(-\delta_0, \delta_0)\setminus \{0\}$ exists for small $\delta_0>0$
under the condition $a_1b_1<0$, $\tilde{a}_1>0$,
the origin $O_1$ of system  $(I)$ is possibly a center. Thus we get the parameter condition $a_1<0, b_1>0$ and $\tilde{a}_1>0$
from the expression of the crossing set $\mathcal{L}_c^I$ of piecewise smooth  system $(I)$.

When $a_1<0, b_1>0$ and $\tilde{a}_1>0$, the crossing set $\mathcal{L}_c^I$ is the $x$-axis except the origin
and the orbits surrounding the origin are spirals rotating  anti-clockwise.
%%%%
Let $p^+(r, \theta)$ (resp. $p^-(r, \theta)$) be the solution of piecewise smooth  system $(I)$ in polar coordinates
$(x, y) = (\rho\cos\theta,  \rho\sin\theta)$ for $0\le \theta<\pi$ (resp. $-\pi\le \theta<0$),   satisfying that the initial condition $p^+(r,0)=r$ (resp.
$p^-(r,-\pi)=r$) holds,
which is well defined in the region $\mathbb{R}^2\setminus \mathcal{L}_s^I$.
Then, we  define the positive Poincar\'e half-return map as
 $\mathcal{P}^+(r):=\lim_{ \theta\to \pi} p^+(r,\theta)$
 and the negative Poincar\'e half-return map as $\mathcal{P}^-(r):=\lim_{ \theta\to 0} p^-(r, \theta)$, % by using a similar ideal in \cite{Buz},
 as shown in Figure \ref{Center1}.
 The  Poincar\'e return map associated to  piecewise smooth  system $(I)$ is given by the composition of
these two maps
\begin{equation}\label{poincare}
\mathcal{P}_I(r):=\mathcal{P}^-(\mathcal{P}^+(r)).
\end{equation}
In order to obtain the existence of a center and further a global center at the origin,  via the definition \eqref{poincare} we need to prove
$\mathcal{P}_I(r)-r\equiv 0$ for arbitrary $r>0$.

\begin{figure}[htb]
\centering
\includegraphics[scale=.60]{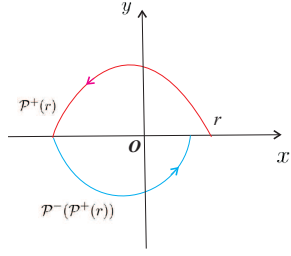}
\caption{  Existence of closed orbits for system $(I)$.  } %piecewise smooth
\label{Center1}
\end{figure}

Piecewise smooth system $(I)$ has a polynomial first integral
$
H_1^+(x, y)=\frac{a_1}{3}y^3 -\frac{b_1}{2} x^2
$
if $y\ge 0$, and a first integral
$
H_1^-(x, y)=\frac{\tilde{a}_1}{3}y^3 -\frac{1}{2} x^2
$
if $y< 0$.
%%%%%%
Then we have
$
H_1^+(r, 0)=H_1^+(\mathcal{P}^+(r), 0),
$
yielding that $\mathcal{P}^+(r)=r$. Furthermore, by
$$
H_1^-(\mathcal{P}^+(r), 0)=H_1^-(\mathcal{P}_I(r), 0)
$$
we get  $\mathcal{P}_I(r)=r$, implying that the solution curve of  piecewise smooth  system $(I)$  through $(r,0)$ is a closed orbit for arbitrary $r>0$.
Notice that the origin $O_1$ is the unique
singularity  of piecewise smooth  system $(I)$ when  $a_1<0, b_1>0$ and $\tilde{a}_1>0$. Therefore,
the origin $O_1$ of piecewise smooth  system $(I)$ is a center if and only if $a_1<0, b_1>0$ and $\tilde{a}_1>0$ and furthermore
it is a global center.

%%%%%%%%%%%%%%%%%%%%%%%%%%%%%%%%

\medskip

Next, in the case $a_1<0, b_1>0$ and $\tilde{a}_1>0$ we  prove that the center $O_1$  at the origin of piecewise smooth  system $(I)$ is not  isochronous.
Assuming that  $\Gamma_{r_0}$ is the closed trajectory through $(r_0, 0)$ inside the periodic annulus of the  center $O_1$,
we can define the positive half-period function as
 $
 T^+(r_0):= \int_{\Gamma^+_{r_0}} ~dt
 $
 and  the negative  half-period function as
 $
 T^-(r_0):= \int_{\Gamma^-_{r_0}} ~dt,
 $
 where $r_0>0$,
 $$
 \Gamma^+_{r_0}= \{(\rho,\theta)\in \mathbb{R}^2  ~|~\rho=p^+(r_0, \theta)\}=\{(x,y)\in \mathbb{R}^2  ~|~y= \Big( \frac{3 b_1}{2 a_1} (x^2-r_0^2) \Big)^{\frac{1}{3}} ~\}
$$
and
$$
\Gamma^-_{r_0}= \{(\rho,\theta)\in \mathbb{R}^2  ~|~\rho=p^-(r_0, \theta)\}=\{(x,y)\in \mathbb{R}^2  ~|~y= \Big( \frac{3}{2 \tilde{a}_1} (x^2-r_0^2) \Big)^{\frac{1}{3}} ~\}.
$$
 Thus the complete period function associated to  piecewise smooth  system $(I)$ is given by the sum of these two functions
\begin{equation*}\label{Per-func}
\begin{aligned}
T_I(r_0)&= \oint_{\Gamma_{r_0}} ~dt=T^+(r_0)+T^-(r_0)
=\int_{\Gamma^+_{r_0}} ~\frac{dx}{a_1 y^2} + \int_{\Gamma^-_{r_0}} ~\frac{dx}{\tilde{a}_1 y^2}
\\
& = \int_{r_0}^{-r_0} ~\frac{dx}{a_1  \Big( \frac{3 b_1}{2 a_1} (x^2-r_0^2) \Big)^{\frac{2}{3}}}
+ \int_{-r_0}^{r_0} ~\frac{dx}{\tilde{a}_1  \Big( \frac{3}{2 \tilde{a}_1} (x^2-r_0^2) \Big)^{\frac{2}{3}} }
\\
& = \beta_0 r_0^{-\frac{1}{3}},
\end{aligned}
\end{equation*}
where $\beta_0=\frac{ (\frac{2}{3})^{\frac{5}{3}} \pi^{\frac{3}{2}} \sqrt{3} }{ \Gamma(\frac{2}{3})\Gamma(\frac{5}{6}) }
(-a_1^{-\frac{1}{3}} b_1^{-\frac{2}{3}}+\tilde{a}_1^{-\frac{1}{3}})>0$ %if $a_1<0, b_1>0$ and $\tilde{a}_1>0$,
and the  Gamma function  $\Gamma(z) = \int_0^{\infty} ~e^{-s} s^{z-1} ~ ds $.
Clearly the period $T_I(r_0)$ of the periodic orbits inside the period annulus of the center $O_1$ is monotonic in $r_0$ and then it cannot
be isochronous.
We complete the proof of the theorem.
\end{proof}

\bigskip

%%%%%%%%%%%%%%%%%%%%%%%%%%%%%%%%%%%%%%%%%%%%%%%%%%%%%%%%%%%%

\section{Global structures  of piecewise smooth  quadratic quasi--homogeneous  systems}
\label{s3}

%%%%%%%%%%%%%%%%%%%%%%%%%%%%%

We will apply the ideal of Poincar\'e  compactification to study the global structures of piecewise smooth
quadratic quasi--homogeneous but non--homogeneous systems.
Although this theory is usually used  in smooth systems, our strategy is to analyze the properties at infinity on half plane one by one,
and via the discussion of sliding sets and crossing sets
we can  summarize the global topological structures of piecewise smooth  quadratic quasi--homogeneous  systems.

First we briefly remind the procedure of Poincar\'e  compactification   \cite{AndLGM1973, DumoLA2006}.
%for the next dynamical analysis of  differential systems.
%%%%%
Consider  a planar vector field
$$
\mathcal{X}= \tilde{P}(x,y) \dfrac{\partial}{\partial x} + \tilde{Q}(x,y) \dfrac{\partial}{\partial y},
$$
where $\tilde{P}(x,y)$ and $\tilde{Q}(x,y)$ are polynomials of degree $n$.
%The \textit{Poincar\'e compactified vector field $p(\mathcal{X})$ corresponding to $\mathcal{X}$} is an analytic vector field induced on $\mathbb{S}^2$ as
Set  $\mathbb{S}^2 = \{ y=(y_1,y_2,y_3) \in  \mathbb{R}^3 : y_1^2+y_2^2+y_3^2 =1 \}$,
$\mathbb{S}^1$ be the equator of  $\mathbb{S}^2$ and $p(\mathcal{X})$ be the \textit{Poincar\'e compactification} of $\mathcal{X}$ on $\mathbb{S}^2$.
%On  $\mathbb{S}^2 \setminus \mathbb{S}^1$ there are two symmetric copies of $\mathcal{X}$, and once we know
%the behaviour of $p(\mathcal{X})$ near $\mathbb{S}^1$, we know the behaviour of $\mathcal{X}$ in a neighbourhood of the infinity.
Note that $\mathbb{S}^1$ is invariant under the flow of $p(\mathcal{X})$.
%The projection of the closed northern hemisphere of $\mathbb{S}^2$ on $y_3 = 0$
%under $(y_1, y_2, y_3) \mapsto (y_1, y_2)$ is called the \textit{Poincar\'e disc}, and its boundary is $\mathbb{S}^1$.

We consider the six local charts $U_i = \{ y \in  \mathbb{S}^2 : y_i > 0 \}$ and $V_i = \{ y \in  \mathbb{S}^2 : y_i < 0 \}$ where $i = 1, 2, 3$
for the calculation of the expression of $p(\mathcal{X})$. % since $\mathbb{S}^2$ is a differentiable manifold.
The diffeomorphisms $F_i : U_i \rightarrow \mathbb{R}^2$ and $G_i : V_i \rightarrow \mathbb{R}^2$ for $i = 1, 2, 3$ are the inverses of the central projections from the planes tangent at the points $(1, 0, 0), (-1, 0, 0), (0, 1, 0), (0,-1, 0), (0, 0, 1)$ and $(0, 0,-1)$ respectively.
We denote by $(u, z)$ the value of $F_i(y)$ or $G_i(y)$ for any $i = 1, 2, 3$.
%Note that $(u, v)$ represents different things according to the local charts under consideration.
%%%%
The expression for $p(\mathcal{X})$ in the local chart $(U_1, F_1)$ is given by
$$
\dot{u} = z^n \left[ -u \tilde{P} \left( \dfrac{1}{z}, \dfrac{u}{z}\right) + \tilde{Q} \left( \dfrac{1}{z}, \dfrac{u}{z}\right) \right],
\hspace{0.3cm}
\dot{z} = - z^{n+1} \tilde{P} \left( \dfrac{1}{z}, \dfrac{u}{z}\right),
$$
for $(U_2, F_2)$ is
$$
\dot{u} = z^n \left[ \tilde{P} \left( \dfrac{u}{z}, \dfrac{1}{z}\right) -u \tilde{Q} \left( \dfrac{u}{z}, \dfrac{1}{z}\right) \right],
\hspace{0.3cm}
\dot{z} = - z^{n+1} \tilde{Q} \left( \dfrac{u}{z}, \dfrac{1}{z}\right),
$$
and for $(U_3, F_3)$ is
$$
\dot{u} = \tilde{P}(u,z),
\hspace{0.3cm}
\dot{z} = \tilde{Q}(u,z).
$$
%%%%
%The expressions for $V_i$'s are the same as that for $U_i$'s but multiplied by the factor $(-1)^{n-1}$.
%In these coordinates $v = 0$ always denotes the points of $\mathbb{S}^1$.
%Since the unique singular points at infinity which cannot be contained into the charts $U_1 \cup V_1$ are the origins of $U_2$ and $V_2$,
When we study the equilibria at infinitt on the charts $U_2 \cup V_2$, we only need to verify if the origins of these charts are
singular points.

%It is said that two polynomial vector fields $\mathcal{X}$ and $\mathcal{Y}$ on $\mathbb{R}^2$ are \textit{topologically equivalent} if there exists a homeomorphism on $\mathbb{S}^2$ preserving the infinity $\mathbb{S}^1$ carrying orbits of the flow induced by $p(\mathcal{X})$ into orbits of the flow induced by $p(\mathcal{Y})$, preserving or not the sense of all orbits.

\medskip

%%%%%%%%%%%%%%%%%%%%%%%%%%%%%%%%%%%%

\begin{theorem}
\label{th-X111}
Piecewise smooth  quadratic quasi--homogeneous systems $(I)$, $(II)$ and $(III)$ have totally $8$, $64$ and $36$
global phase portraits  respectively.
% of to one of the following ones given in Figure ??, Figure ?? and  Figure ??
 %without taking into account the direction of the time. topologically equivalent
\end{theorem}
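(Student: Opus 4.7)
The plan is to treat each of the three systems $(I)$, $(II)$, $(III)$ separately by a case analysis on the signs (and, where needed, on critical ratios) of the free parameters, compiling for every case a complete description of the half--plane dynamics in $Y^\pm$, the nature of the infinite equilibria produced by the Poincar\'e compactification reviewed above, and the matching along $\mathcal L$ prescribed by the crossing/sliding decompositions \eqref{I-c}--\eqref{III-cs}. The outputs of these steps are then glued and quotiented by the obvious symmetries of the family to give the advertised totals $8$, $64$ and $36$.

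First I would enumerate the candidate parameter regions. System $(I)$ has three essential non--zero parameters $a_1, b_1, \tilde a_1$, yielding $2^3=8$ sign patterns which I expect to give $8$ inequivalent global portraits (the center case being precisely the one identified in Theorem~\ref{th-center}). System $(II)$ has four non--zero parameters $a_2, b_{21}, b_{22}, \tilde a_2$; here the bare $2^4=16$ sign patterns are refined by eigenvalue ratios such as $a_2/b_{22}$ crossing critical thresholds (of type $a_2+2b_{22}=0$ or $a_2=b_{22}$) that alter the topological type of the infinite equilibria, and these refinements should produce the $64$ claimed portraits. System $(III)$ is linear in $x$ with an invariant axis $y=0$ coinciding with $\mathcal L$; the relevant discrete data is the sign pattern of $(a_{31}, a_{32}, b_3, \tilde a_{31})$ together with the Filippov flow along the fully sliding set $\mathcal L_s^{III}=\mathcal L$.

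Second, for each case I would apply Poincar\'e compactification to the upper and lower polynomial vector fields independently. In the local charts $(U_1,F_1)$ and $(U_2,F_2)$ I would read off the equilibria on the equator $\SSS^1$ and classify them; those that turn out nilpotent or fully degenerate (as is generic for quasi--homogeneous fields) I would desingularise by a quasi--homogeneous blow--up along the MWV $(s_1,s_2)$. The finite phase picture is then organised by explicit algebraic invariants: in $(I)$ by the cubic first integrals $H_1^\pm$ already found in the proof of Theorem~\ref{th-center}; in $(II)$ by the invariant line $x=0$ together with Bernoulli--type integrals of each half--system; in $(III)$ by the invariant line $y=0$ and the separable structure, which makes every orbit explicit.

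Third, I would glue the two half--portraits along $\mathcal L$ in accordance with Filippov's convention: on $\mathcal L_c$ orbits concatenate transversally, on $\mathcal L_s$ the sliding vector field and the position of its tangencies and singular sliding points determine the boundary behaviour, and every boundary equilibrium is assigned the type computed in Section~\ref{s2}. Finally I would identify topologically equivalent portraits under the symmetries of the family---for instance $(x,y,t)\mapsto(-x,y,-t)$, swapping of the two half--systems when permissible, and time reversal---to reach the final counts. The main obstacle will be the bookkeeping and verification for system $(II)$: the combinatorial explosion coming from the eigenvalue--ratio thresholds interacts non--trivially with the presence or absence of a crossing set on $\mathcal L$ in \eqref{II-c}--\eqref{II-s} and with the desingularisation of degenerate points at infinity, so that showing exactly $64$ inequivalent portraits arise---no more, no less---and exhibiting a representative of each is where most of the work will lie.
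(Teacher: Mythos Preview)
Your overall strategy---case analysis on parameter signs, Poincar\'e compactification of each half-system, classification of the (often degenerate) equilibria at infinity, and gluing along $\mathcal L$ via the crossing/sliding data---is exactly the route the paper takes. Two concrete points in your plan, however, would prevent you from landing on the numbers $8$, $64$, $36$.

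First, the bifurcation thresholds you list are not the ones that actually govern the count. For system $(II)$ the paper obtains $64$ from the $2^6$ sign combinations of $b_{21}$, $b_{22}-a_2$, $b_{22}$, $a_2$, $\tilde a_2$ and $\tilde a_2-1$; your candidate threshold $a_2+2b_{22}=0$ does not arise, while you omit the lower-half threshold $\tilde a_2=1$ coming from the analysis of the origin of \eqref{II-Iy}. For system $(III)$ your proposed discrete data is only the sign pattern of $(a_{31},a_{32},b_3,\tilde a_{31})$, which gives at most $16$ cases; the paper's $36$ comes from the additional thresholds $a_{31}=2b_3$ (governing the sector structure of the degenerate equilibrium at infinity, via Reyn's classification) and $\tilde a_{31}\in\{0,2\}$ (so that $\tilde a_{31}$ contributes three ranges, not two). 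Without these you cannot reach $36$.

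Second, your final step---quotienting by symmetries such as time reversal and swapping the two half-systems---runs contrary to the paper's convention. The paper explicitly declares that it does \emph{not} identify portraits related by transformations exchanging $Y^+$ and $Y^-$, and it also does not identify time-reversed portraits. The symmetry $(x,y,t)\mapsto(-x,y,-t)$ is used in the paper only to say that each individual portrait of $(I)$ is symmetric in $x$, so one may draw half of it; it is not used to reduce the number of portraits. If you carry out the quotient you describe, you will obtain strictly fewer than $8$, $64$, $36$ classes and will not match the statement.
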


\begin{proof}
For piecewise smooth  system $(I)$, the origin of the corresponding smooth system  $(i)$  is a cusp by the proof of Theorem \ref{th-center}.
Moreover, in the half plane $y>0$ (resp. $y<0$) the piecewise smooth  system $(I)$ has an invariant curve $\frac{a_1}{3}y^3 -\frac{b_1}{2} x^2=0$
(resp. $\frac{\tilde{a}_1}{3}y^3 -\frac{1}{2} x^2=0$) passing through the origin $O_1:(0,0)$ when $a_1b_1>0$ (resp. $\tilde{a}_1<0$).

Taking respectively the Poincar\'e transformations $x=1/z, \, y=u/z$ in the local chart $U_1$
and $x=u/z, \, y=1/z$ in the local chart   $U_2$, % together with the time rescaling $dt=z d\tau$,
smooth system  $(i)$  around the equator of the Poincar\'e sphere can be written respectively  in
\begin{eqnarray}
\dot{u}= b_1 z-a_1 u^3,  \qquad
%\\
\dot{z}=-a_1 u^2 z
\label{I-Ix}
\end{eqnarray}
and
\begin{eqnarray*}
\dot{u}=a_1-b_1 u^2 z,  \qquad
%\\
\dot{z}=- b_1 u z^2.
\label{I-Iy}
\end{eqnarray*}
Then  singularities at infinity of system $(i)$ only exist on the $x$-axis,
whose corresponding   equilibrium in the local chart $U_1$ (the origin of \eqref{I-Ix}) is a node   by  \cite[Theorem 3.5]{DumoLA2006}.
Besides, system $(i)$ has the polynomial first integral
$H_1^+(x, y)=\frac{a_1}{3}y^3 -\frac{b_1}{2} x^2$. Therefore, it is not difficult to get global phase portraits  of smooth system $(i)$.

Notice that piecewise smooth  system $(I)$ is invariant  under the change $(x, y)\to (-x, y)$ after a time rescaling $d\tau\to -d\tau$,
so we only need to consider phase portraits  in the half plane $x\ge0$.
%Furthermore, system $(i)$ has the polynomial first integral $H_1^+(x, y)=\frac{a_1}{3}y^3 -\frac{b_1}{2} x^2$.
%%
From \eqref{I-c} and \eqref{I-s}, we know that  the crossing set
$\mathcal{L}_c^I= \{(x,y)\in\mathbb{R}^2 | ~y=0,  x\ne 0\}$ if $b_1>0$ and  the sliding set
$ \mathcal{L}_s^I =\{(x,y)\in  \mathbb{R}^2 | ~y=0\}$ if $b_1<0$. The origin $O_1$ is the unique  singular sliding point of piecewise smooth  system $(I)$.
Hence when $b_1>0$, the global phase portraits of piecewise smooth  system $(I)$ can be obtained by the global phase portraits of  system $(i)$
in the half plane $y>0$ and $y<0$ respectively, %as shown in Figure ???,
where four cases % $I.1$-$I.4$ correspond to parameter conditions
$b_1>0, a_1>0, \tilde{a}_1>0$;  $b_1>0, a_1>0, \tilde{a}_1<0$; $b_1>0, a_1<0, \tilde{a}_1<0$ and $b_1>0, a_1<0, \tilde{a}_1>0$ are considered.
Notice that the whole plane is the period annulus of the center at the origin  of  system $(I)$ if $b_1>0, a_1<0, \tilde{a}_1>0$.
There exist infinitely many homoclinic loops connecting with the singularities at infinity on the $x$-axis and
an eight-shape heteroclinic loop connecting with the singularities at infinity on the $x$-axis and the origin if  $b_1>0, a_1>0, \tilde{a}_1<0$.
%%%%%
In contrast, when $b_1<0$ the whole $x$-axis is the sliding set and each point except the origin on the $x$-axis is a "colliding" point of orbits,
i.e., the orbit  connecting the point from the half plane $y>0$ is along the opposite direction with that from the half plane  $y<0$.
Thus, neither closed orbits nor  homoclinic loops could exist if  $b_1<0$. For $b_1<0$, we research the global phase portraits of piecewise smooth  system $(I)$
in four cases: $a_1>0, \tilde{a}_1>0$;  $ a_1>0, \tilde{a}_1<0$; $ a_1<0, \tilde{a}_1<0$ and $ a_1<0, \tilde{a}_1>0$.
%associated  to the case $I.5$-$I.8$ in  Figure ???? respectively.
Note that in the case  $b_1<0, a_1>0, \tilde{a}_1>0$  it seems that the origin is surrounded by closed orbits but it is not true,
since the direction of upper half of each oval is clockwise but the direction of the lower half is anticlockwise.
Thus the ovals existing in the case $b_1<0, a_1<0, \tilde{a}_1>0$ are not  homoclinic loops actually by the similar reason.
Thus we obtain $8$ global phase portraits for piecewise smooth system  $(I)$ under above $8$ parameter conditions.
\medskip

We next investigate the  piecewise smooth  system $(II)$ for its global structures. Using  Theorem 3.5 of  \cite{DumoLA2006},
the origin of  smooth system  $(ii)$   is a saddle  if $a_2b_{22}<0$,
and its neighborhood consists of a hyperbolic sector and an elliptic sector if $a_2b_{22}>0$.
Taking respectively the Poincar\'e transformations
$x=1/z, \, y=u/z$ in the local chart $U_1$ and $x=u/z, \, y=1/z$ in the local chart $U_2$, %together with the time rescaling $dt=z d\tau$,
system  $(ii)$  around the equator of the Poincar\'e sphere can be written respectively as
\begin{eqnarray}
\dot{u}= b_{21} z +(b_{22}-a_2) u^2,   \qquad
%\\
\dot{z}= -a_2 u z
\label{II-Ix}
\end{eqnarray}
and
\begin{eqnarray}
\dot{u}=  (a_2-b_{22})u - b_{21} u^2 z, \qquad
%\\
\dot{z}= -b_{22} z -b_{21} u z^2.
\label{II-Iy}
\end{eqnarray}
Therefore, there exist singularities  of system $(ii)$ located at the infinity of both the $x$-axis and the $y$-axis  if $ b_{22}\ne a_2 $,
which are associated to the origins of system \eqref{II-Ix} and system \eqref{II-Iy} respectively.
It is easy to see that the origin of system  \eqref{II-Iy} is a saddle if $(b_{22}-a_2)b_{22}<0$
and a node if $(b_{22}-a_2)b_{22}>0$. Applying \cite[Theorem 3.5]{DumoLA2006}, the origin of system
\eqref{II-Ix} is a saddle if $(b_{22}-a_2)a_2>0$ and its neighborhood consists of a hyperbolic sector and an elliptic sector if $(b_{22}-a_2)a_2<0$.
 When $ b_{22}=a_2$, the infinity is full up with singularities and there exists a unique orbit connecting with each point at infinity.

From \eqref{II-c} and  \eqref{II-s}, we get that the crossing set
$\mathcal{L}_c^{II}= \{(x,y)\in\mathbb{R}^2 | ~y=0,  x\ne 0\}$ if $b_{21}>0$ and the sliding set
$ \mathcal{L}_s^{II} =\{(x,y)\in  \mathbb{R}^2 | ~y=0\}$ if $b_{21}<0$ for piecewise smooth  system $(II)$.
The origin $O_2:(0,0)$ is the unique  singular sliding point of piecewise smooth  system $(II)$.
Moreover, system $(II)$ has a first integral
\begin{eqnarray*}
H_2^+(x, y) =  \frac{-2 b_{21} x+(a_2-2 b_{22}) y^2}{x^{\frac{2 b_{22}}{a_2}} (-2 b_{22}+a_2)}
\qquad
(resp.  = \frac{- b_{21} x \ln x+b_{22}y^2}{ b_{22}x})
\end{eqnarray*}
when $y\ge 0$ and $a_2\ne 2 b_{22}$ (resp. $a_2=2 b_{22}$), and a first integral
\begin{eqnarray*}
H_2^-(x, y) =  \frac{-2  x+(\tilde{a}_2-2) y^2}{x^{\frac{2}{\tilde{a}_2}} (-2 +\tilde{a}_2)}
\qquad
(resp.  = \frac{x \ln x-y^2}{x })
\end{eqnarray*}
 when $y<0$ and $\tilde{a}_2\ne 2  $ (resp. $\tilde{a}_2=2 $).
Similar to the research of system $(I)$,    the global phase portraits of piecewise smooth  system $(II)$ can be obtained by the global phase portraits of  system $(ii)$
in the half planes $y>0$ and $y<0$ together with the dynamics on the crossing set and sliding set, %as shown in Figure ???,
where $64$ subcases   correspond to parameter conditions obtained by the signs of
$b_{21}$, $b_{22}-a_2$, $b_{22}$, $a_2$, $\tilde{a}_2$ and $\tilde{a}_2-1$.

\smallskip

In order to research the global dynamics of piecewise smooth   system $(III)$, we need find global dynamics of  smooth system  $(iii)$.
 Obviously, the origin $O_3:(0,0)$ of  system $(iii)$ is a saddle if $a_{31}b_3<0$ and a node if $a_{31}b_3>0$.

In the local charts $U_1$ and $U_2$ of the Poincar\'e sphere,  system  $(iii)$ becomes
\begin{eqnarray}
\dot{u}=    (b_3-a_{31}) uz-a_{32} u^3,  \qquad
%\\
\dot{z}=  -a_{31} z^2-a_{32} u^2z
\label{III-Ix}
\end{eqnarray}
and
\begin{eqnarray*}
\dot{u}= a_{32}+(a_{31} -b_3) u z,  \qquad
%\\
\dot{z}=-b_3 z^2,
\label{III-Iy}
\end{eqnarray*}
respectively. % after a time rescaling $dt=z d\tau$.
Then there exist singularities at  infinity of system $(iii)$ only located on the $x$-axis,
which are associated to the origin of \eqref{III-Ix}, a high degenerate equilibrium.
More precisely, the neighborhood of the origin of \eqref{III-Ix} consists of two elliptic sectors and one parabolic
sector if $a_{31} b_3<0$,   two hyperbolic sectors and two parabolic sectors
if $0< a_{31} b_3\le 2b_3^2$, and   two hyperbolic sectors and four parabolic sectors if  $a_{31} b_3>2b_3^2$
by applying results of  Reyn \cite[Figures 8.3c-8.3d]{Reyn2007}. %on Page 203 and

The  sliding set of piecewise smooth system $(III)$ is the whole $x$-axis from \eqref{III-cs}, which is filled with singular sliding points.
Except the origin and singularities  of system $(III)$ located at the infinity of the $x$-axis,
no orbits  connect with the point in the $x$-axis from the half planes $y>0$ or  $y<0$.
Hence, neither closed orbits nor sliding closed orbits  could exist. There exist no homoclinic loops in a bounded region, that is,
a homoclinic loop  has to pass by a singularity at infinity of the $x$-axis if it exists.
Besides, system $(III)$ has a first integral
\begin{eqnarray*}
H_3^+(x, y) =    \frac{(a_{31}-2 b_3) x +a_{32} y^2}{y^{ \frac{a_{31}}{b_3}} (-2 b_3+a_{31})}
\qquad
(resp.  =  - \frac{a_{32} y^2\ln y-b_3 x}{b_3 y^2}  )
\end{eqnarray*}
when $y\ge 0$ and $a_{31}\ne2 b_3$ (resp. $a_{31}=2 b_3$), and a first integral
\begin{eqnarray*}
H_3^-(x, y) =  \frac{(\tilde{a}_{31}-2) x +y^2}{y^{ \tilde{a}_{31}} (-2+\tilde{a}_{31})},
\qquad
(resp.  =  - \frac{y^2\ln y-x}{y^2} )
 \end{eqnarray*}
 when $y< 0$ and $\tilde{a}_{31}\ne2$ (resp. $\tilde{a}_{31}=2$).
From an analogous discussion of  system $(I)$ in the case $b_1<0$,  the above analysis of system $(III)$ provides enough preparation for studying
the global structure of piecewise smooth  system $(III)$ and  we have its 36 global phase portraits
 by  the signs of  $a_{31}$, $ b_3$, $a_{31}-2b_3$, $a_{32}$,  $\tilde{a}_{31}$ and  $\tilde{a}_{31}-2$ respectively.
%associated  to the case $III.1$-$III.36$

 Summarizing the above investigation, we can obtain  global dynamics of piecewise smooth  quadratic quasi--homogeneous systems $(I)$-$(III)$.
%without taking into account direction of the time.
The proof  is completed.
\end{proof}

Here for simplicity, we only present details of topological phase portraits of piecewise  smooth system $(III)$
% (shown  in  Figure \ref{Fig-III}) 
 and omit that of  systems $(I)$ and $(II)$.
The parameter conditions associated  to cases $(1)$-$(36)$ are give in Table 1.
Remark that  we will not consider invertible changes which transform the half plane $y>0$ into the half plane $y<0$ for the topological equivalence of global phase portraits,  in the sense that the vector fields of piecewise  smooth systems are different in the half planes $y>0$ and $y<0$.

Note that for smooth quadratic  quasi--homogeneous system $(i)$ (resp. $(ii)$, $(iii)$)  there only exists $1$ (resp. $4$, $3$)
 global   phase portrait  without taking into account the direction of the time,
but piecewise smooth  quadratic  quasi--homogeneous system $(I)$ (resp. $(II)$, $(III)$)
has  $8$ (resp. $64$, $36$) global  phase portraits.
Thus piecewise smooth   quadratic  quasi--homogeneous systems can exhibit
more complicated and richer dynamics than the smooth ones.

%%%%%%%%%%%%%%%%%%% Figures  III
%\begin{figure*}[htp]
%\begin{center}
%\begin{tabular}{cc}
%%\psfrag{I}{$I_u$} \psfrag{O}{$O$}\psfrag{X}{$x$} \psfrag{Y}{$y$}
%%\includegraphics[scale=.45]{Fig_global.eps}
%\includegraphics[width=3.5in]{III-1.eps}
%\\
%\includegraphics[width=3.5in]{III-2.eps}
%\\
%\includegraphics[width=3.5in]{III-3.eps}
%\\
%\includegraphics[width=3.5in]{III-4.eps}
%\\
%\includegraphics[width=3.5in]{III-5.eps}
%\\
%\includegraphics[width=3.5in]{III-6.eps}
%\end{tabular}
%\end{center}
%\end{figure*}
%
%
%
%\begin{figure}[htp]
%\begin{center}
%\begin{tabular}{cc}
%%\psfrag{I}{$I_u$} \psfrag{O}{$O$}\psfrag{X}{$x$} \psfrag{Y}{$y$}
%%\includegraphics[scale=.45]{Fig_global.eps}
%\includegraphics[width=3.5in]{III-7.eps}
%\\
%\includegraphics[width=3.5in]{III-8.eps}
%\\
%\includegraphics[width=3.5in]{III-9.eps}
%\\
%\includegraphics[width=3.5in]{III-10.eps}
%\\
%\includegraphics[width=3.5in]{III-11.eps}
%\\
%\includegraphics[width=3.5in]{III-12.eps}
%\end{tabular}
%\end{center}
%\caption{ The global  phase portraits of  system $(III)$. }
%\label{Fig-III}
%\end{figure}
%
%
%
%%\begin{figure}[htp]
%%\begin{center}
%%\begin{tabular}{cc}
%% \includegraphics[width=3.5in]{III-12.eps}
%% \end{tabular}
%%\end{center}
%%
%%\end{figure}
%

 \smallskip

%  \clearpage

 \begin{center}
 {\footnotesize
\begin{tabular}{|c|c|c|}
%\hline \multicolumn{3}{|c|}{?}\\
\hline
cases & Parameter conditions
\\
\hline
 (1) &  $a_{31}<0$, $ b_3<0$, $a_{31}\ge 2b_3$, $a_{32}<0$ and  $\tilde{a}_{31}>2$
\\
\hline
(2) &  $a_{31}<0$, $ b_3<0$, $a_{31}\ge 2b_3$,  $a_{32}<0$ and  $0<\tilde{a}_{31}\le 2$
\\
\hline
(3) &  $a_{31}<0$, $ b_3<0$, $a_{31}\ge 2b_3$,  $a_{32}<0$ and  $ \tilde{a}_{31}<0$
\\
\hline
(4) &  $a_{31}<0$, $ b_3<0$, $a_{31}\ge 2b_3$,  $a_{32}>0$ and  $ \tilde{a}_{31}>2$
\\
\hline
(5) &  $a_{31}<0$, $ b_3<0$, $a_{31}\ge 2b_3$,  $a_{32}>0$ and  $0<\tilde{a}_{31}\le 2$
\\
\hline
(6) &  $a_{31}<0$, $ b_3<0$, $a_{31}\ge 2b_3$,  $a_{32}>0$ and  $ \tilde{a}_{31}<0$
\\
\hline
(7) &  $a_{31}<0$, $ b_3<0$, $a_{31}< 2b_3$,  $a_{32}>0$ and  $ \tilde{a}_{31}>2$
\\
\hline
(8) &  $a_{31}<0$, $ b_3<0$, $a_{31}< 2b_3$,  $a_{32}>0$ and  $0<\tilde{a}_{31}\le 2$
\\
\hline
(9) &  $a_{31}<0$, $ b_3<0$, $a_{31}< 2b_3$,  $a_{32}>0$ and  $ \tilde{a}_{31}<0$
\\
\hline
(10) &  $a_{31}<0$, $ b_3<0$, $a_{31}< 2b_3$,  $a_{32}<0$ and  $ \tilde{a}_{31}>2$
\\
\hline
(11) &  $a_{31}<0$, $ b_3<0$, $a_{31}< 2b_3$,  $a_{32}<0$ and  $0<\tilde{a}_{31}\le 2$
\\
\hline
(12) &  $a_{31}<0$, $ b_3<0$, $a_{31}< 2b_3$,  $a_{32}<0$ and  $ \tilde{a}_{31}<0$
\\
\hline
(13) &  $a_{31}>0$, $ b_3>0$, $a_{31}\le 2b_3$,  $a_{32}<0$ and  $ \tilde{a}_{31}>2$
\\
\hline
(14) &  $a_{31}>0$, $ b_3>0$, $a_{31}\le 2b_3$, $a_{32}<0$ and  $0<\tilde{a}_{31}\le 2$
\\
\hline
(15) & $a_{31}>0$, $ b_3>0$, $a_{31}\le 2b_3$,  $a_{32}<0$ and  $ \tilde{a}_{31}<0$
\\
\hline
(16) &  $a_{31}>0$, $ b_3>0$, $a_{31}\le 2b_3$,  $a_{32}>0$ and  $ \tilde{a}_{31}>2$
\\
\hline
(17) &  $a_{31}>0$, $ b_3>0$, $a_{31}\le 2b_3$, $a_{32}>0$ and  $0<\tilde{a}_{31}\le 2$
\\
\hline
(18) & $a_{31}>0$, $ b_3>0$, $a_{31}\le 2b_3$,  $a_{32}>0$ and  $ \tilde{a}_{31}<0$
\\
\hline
(19) &  $a_{31}>0$, $ b_3>0$, $a_{31}> 2b_3$,  $a_{32}<0$ and  $ \tilde{a}_{31}>2$
\\
\hline
(20) &  $a_{31}>0$, $ b_3>0$, $a_{31}> 2b_3$, $a_{32}<0$ and  $0<\tilde{a}_{31}\le 2$
\\
\hline
(21) & $a_{31}>0$, $ b_3>0$, $a_{31}> 2b_3$,  $a_{32}<0$ and  $ \tilde{a}_{31}<0$
\\
\hline
(22) &  $a_{31}>0$, $ b_3>0$, $a_{31}> 2b_3$,  $a_{32}>0$ and  $ \tilde{a}_{31}>2$
\\
\hline
(23) &  $a_{31}>0$, $ b_3>0$, $a_{31}> 2b_3$, $a_{32}>0$ and  $0<\tilde{a}_{31}\le 2$
\\
\hline
(24) & $a_{31}>0$, $ b_3>0$, $a_{31}> 2b_3$,  $a_{32}>0$ and  $ \tilde{a}_{31}<0$
\\
\hline
(25) & $a_{31}>0$, $ b_3<0$,  $a_{32}<0$ and  $ \tilde{a}_{31}>2$
\\
\hline
(26) &  $a_{31}>0$, $ b_3<0$,  $a_{32}<0$ and  $0<\tilde{a}_{31}\le 2$
\\
\hline
(27) & $a_{31}>0$, $ b_3<0$,  $a_{32}<0$ and  $ \tilde{a}_{31}<0$
\\
\hline
(28) & $a_{31}>0$, $ b_3<0$,  $a_{32}>0$ and  $ \tilde{a}_{31}>2$
\\
\hline
(29) &  $a_{31}>0$, $ b_3<0$,  $a_{32}>0$ and  $0<\tilde{a}_{31}\le 2$
\\
\hline
(30) & $a_{31}>0$, $ b_3<0$,  $a_{32}>0$ and  $ \tilde{a}_{31}<0$
\\
\hline
(31) & $a_{31}<0$, $ b_3>0$,  $a_{32}<0$ and  $ \tilde{a}_{31}>2$
\\
\hline
(32) &  $a_{31}<0$, $ b_3>0$,  $a_{32}<0$ and  $0<\tilde{a}_{31}\le 2$
\\
\hline
(33) & $a_{31}<0$, $ b_3>0$,  $a_{32}<0$ and  $ \tilde{a}_{31}<0$
\\
\hline
(34) & $a_{31}<0$, $ b_3>0$,  $a_{32}>0$ and  $ \tilde{a}_{31}>2$
\\
\hline
(35) &  $a_{31}<0$, $ b_3>0$,  $a_{32}>0$ and  $0<\tilde{a}_{31}\le 2$
\\
\hline
(36) & $a_{31}<0$, $ b_3>0$,  $a_{32}>0$ and  $ \tilde{a}_{31}<0$
\\
\hline
\end{tabular}
}
\vskip 0.2cm

Table 1. Parameter conditions for global  phase portraits of  system $(III)$. %Figure \ref{Fig-III}.
\end{center}

From the global dynamics of piecewise  smooth systems $(I)$-$(III)$, we can research the global bifurcation and  unfolding  of all special orbits
including homoclinic loops (or heteroclinic loops), closed orbits, equilibria and equilibria at infinity for the systems.
%As seen in Figure \ref{Fig-III} 
For example,  the unfoldings are presented in phase portraits (2) and (3)  if we choose $\mu_1=\tilde{a}_{31}$ as an unfolding parameter.
A  homoclinic (or heteroclinic) bifurcation happens when $\mu_1$ passes through zero. More precisely, when $\mu_1>0$
there exist infinitely many heteroclinic loops connecting with the origin and equilibria at infinity on the $x$-axis, and
 when $\mu_1<0$ some heteroclinic orbits in the half plane $y<0$ become homoclinic loops connecting with the equilibria at infinity on the $x$-axis.
 If we choose $\mu_2=a_{32}$ as another unfolding parameter, we find that the heteroclinic loops burst out when $\mu_2$ varies from negative to positive
 by phase portraits  (2) and (5). At the same time it can be observed clearly the change of sectors in a neighborhood of equilibria and equilibria at infinity, which  exhibits  bifurcations of equilibria.
 We can also notice other global and local bifurcations if we choose  $ b_3$, $a_{31}-2b_3$,   $a_{31}$ and  $\tilde{a}_{31}-2$ as unfolding parameters
 for piecewise smooth system $(III)$. % and see bifurcations of equilibria in these unfoldings.

\bigskip

%%%%%%%%%%%%%%%%

\section{Limit cycle  bifurcations  by perturbing piecewise smooth  quadratic quasi--homogeneous  systems}

From Theorem \ref{th-center},  only system  $(I)$ of all piecewise smooth   quadratic quasi-homogeneous  systems has a center at the origin,
%when  $a_1<0, b_1>0$ and $\tilde{a}_1>0$,
which is global if it exists. In this section we research the bifurcation of limit cycles by perturbing
piecewise smooth  quadratic quasi--homogeneous  system  $(I)$ with arbitrary piecewise polynomials of degree $n\in \mathbb{N}$, where $\mathbb{N}=\mathbb{Z}_+\cup \{0\}$.

Consider the following one-parametric family of piecewise smooth systems
\begin{eqnarray}
 %\begin{cases}
     \dot{x}= H_y(x,y) +  \epsilon f(x,y), \qquad
%\\
    \dot{y}=-H_x(x,y) + \epsilon g(x,y),
 %\end{cases}
 \label{I-epsilon}
\end{eqnarray}
where $\epsilon \in \mathbb{R}$ is the small perturbation parameter,
\begin{equation*}
 H(x,y)=
 \begin{cases}
H^+(x, y):=H_1^+(x, y)=\frac{a_1}{3}y^3 -\frac{b_1}{2} x^2,    ~\mbox{ if }~ y\ge 0,
\\
H^-(x, y):=H_1^-(x, y)=\frac{\tilde{a}_1}{3}y^3 -\frac{1}{2} x^2,  ~ \mbox{ if }~ y < 0,
 \end{cases}
\end{equation*}
%%%%
\begin{equation*}
 f(x,y)=
 \begin{cases}
f^+(x, y)= \sum_{i+j=0}^n c_{ij}^+ ~ x^i y^j,    ~\mbox{ if }~ y\ge 0,
\\
f^-(x, y)= \sum_{i+j=0}^n c_{ij}^- ~ x^i y^j,  ~ \mbox{ if }~ y < 0,
 \end{cases}
\end{equation*}
%and
\begin{equation*}
 g(x,y)=
 \begin{cases}
g^+(x, y)= \sum_{i+j=0}^n d_{ij}^+ ~ x^i y^j,    ~\mbox{ if }~ y\ge 0,
\\
g^-(x, y)= \sum_{i+j=0}^n d_{ij}^- ~ x^i y^j,  ~ \mbox{ if }~ y < 0
 \end{cases}
\end{equation*}
for arbitrary  $ c_{ij}^{\pm}, d_{ij}^{\pm}\in \mathbb{R}$, and $a_1<0, b_1>0$ and $\tilde{a}_1>0$.
Our aim is to give the maximum number of limit cycles in terms of $n$ which can bifurcate from the periodic orbits of the center
 at the origin of system $(I)$  with  $\epsilon=0$, inside the family \eqref{I-epsilon}  for nonzero $\epsilon$.

We will use Melnikov method to investigate the number of bifurcated limit cycles from system \eqref{I-epsilon}.
Let
\begin{align*}
 L_h^+ = \{(x,y)\in \mathbb{R}^2 \mid H_1^+(x, y)= -\frac{h}{2}, ~h>0 \},   ~\mbox{ if }~ y\ge 0,
\\
 L_h^- = \{(x,y)\in \mathbb{R}^2 \mid H_1^-(x, y)= -\frac{h}{2b_1}, ~h>0 \},  ~ \mbox{ if }~ y < 0.
 \end{align*}
Then the  family of periodic orbits of system  \eqref{I-epsilon} with  $\epsilon=0$ is presented by $L_h=L_h^+ \cup L_h^-$, where $h>0$.

Using the idea in \cite{LiuH2010} for piecewise smooth  system with a discontinuous line the $y$-axis,
we have the first order Melnikov function for system \eqref{I-epsilon} along the family of periodic orbits $L_h$,
which is
\begin{eqnarray}
\qquad\qquad M(h, \epsilon)=\frac{H_x^+(A)}{H_x^-(A)}  \Big(\frac{H_x^-(B)}{H_x^+(B)} \int_{L_h^+} (g^+dx-f^+dy) +\int_{L_h^-} (g^-dx-f^-dy) \Big),
 \label{Melikov}
\end{eqnarray}
where points $A=(\sqrt{\frac{h}{b_1}}, 0)$ and $B=(-\sqrt{\frac{h}{b_1}}, 0)$,  as shown in Figure \ref{Fig-Mel}.

\begin{figure}[htb]
\centering
\includegraphics[scale=.70]{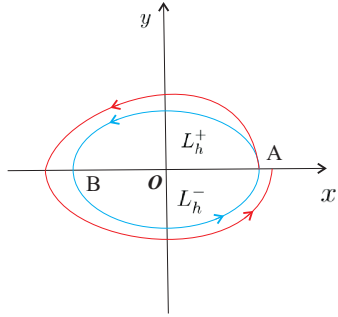}
\caption{ The closed orbit of system  $(I)$ and its perturbation.  }
\label{Fig-Mel}
\end{figure}

%%%%%%%%%%%%%%

\begin{lemma}
\label{lm-Mel}
For  piecewise smooth  system \eqref{I-epsilon}, % with the family of periodic orbits $L_h$,
we have the  first order  Melnikov function
\begin{eqnarray}
M(h, \epsilon)=    \sum_{2k+j=0}^n  \xi_{2k,j} ~h^{k+\frac{j}{3}+\frac{1}{2}},
 \label{Melikov2}
\end{eqnarray}
where coefficients $ \xi_{2k,j}$ are given in \eqref{xi}. % is a polynomial of parameters of $p(x,y)$ and $q(x,y)$.
\end{lemma}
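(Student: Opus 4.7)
The plan is to substitute the polynomial data $f^\pm,g^\pm$ into the integrals in \eqref{Melikov}, rescale to pull out the $h$-dependence, and use the $x\mapsto -x$ symmetry of the curves $L_h^\pm$ to discard half the resulting terms.

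First I would evaluate the ratios in front of \eqref{Melikov}. Since $H_x^+=-b_1x$ and $H_x^-=-x$, at the endpoints $A=(\sqrt{h/b_1},0)$ and $B=(-\sqrt{h/b_1},0)$ one finds $H_x^+(A)/H_x^-(A)=b_1$ and $H_x^-(B)/H_x^+(B)=1/b_1$, reducing \eqref{Melikov} to
\begin{equation*}
M(h,\epsilon)=\int_{L_h^+}(g^+\,dx-f^+\,dy)+b_1\int_{L_h^-}(g^-\,dx-f^-\,dy).
\end{equation*}

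Next I would introduce the scaling $x=\sqrt{h/b_1}\,u$, $u\in[-1,1]$. Solving the equations defining $L_h^\pm$ for $y$ gives $y=h^{1/3}\,Y^\pm(u)$, where $Y^\pm$ are explicit smooth even functions of $u$ on $[-1,1]$ that depend only on $a_1,\tilde a_1,b_1$, with $Y^+\ge 0$ and $Y^-\le 0$. Consequently, for every monomial,
\begin{align*}
\int_{L_h^\pm} x^i y^j\,dx &= \kappa^{\pm,1}_{i,j}\,h^{(i+1)/2+j/3},\\
\int_{L_h^\pm} x^i y^j\,dy &= \kappa^{\pm,2}_{i,j}\,h^{i/2+(j+1)/3},
\end{align*}
where the $\kappa^{\pm,\ast}_{i,j}$ are absolute constants, namely weighted integrals of $u^i(Y^\pm)^j$ or $u^i(Y^\pm)^j(Y^\pm)'$ over $[-1,1]$.

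The decisive step is a parity argument: since each $Y^\pm$ is even, $(Y^\pm)'$ is odd, so $\kappa^{\pm,1}_{i,j}=0$ when $i$ is odd and $\kappa^{\pm,2}_{i,j}=0$ when $i$ is even. Writing the surviving indices in the $dx$-integrals as $i=2k$ and those in the $dy$-integrals as $i=2k+1$ (and then reindexing $j\mapsto j-1$ in the latter), every remaining term has the form (constant)$\cdot h^{k+j/3+1/2}$ with $k,j\ge 0$, and the constraint $i+j\le n$ becomes $2k+j\le n$. Collecting these contributions from $L_h^+$ and $L_h^-$ with the prefactors $1$ and $b_1$ and with the appropriate $c_{ij}^\pm$ or $d_{ij}^\pm$ yields \eqref{Melikov2} together with the explicit coefficients $\xi_{2k,j}$ listed in \eqref{xi}. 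The only mildly delicate point is keeping orientation signs consistent --- $L_h^+$ is traversed from $A$ to $B$ anticlockwise while $L_h^-$ runs from $B$ back to $A$ --- when writing out the $\xi_{2k,j}$; beyond this bookkeeping the computation is mechanical.
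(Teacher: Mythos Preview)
Your proposal is correct and follows essentially the same route as the paper: evaluate the prefactor ratios to reduce \eqref{Melikov} to $\int_{L_h^+}(g^+dx-f^+dy)+b_1\int_{L_h^-}(g^-dx-f^-dy)$, rescale to extract the $h$-powers of each monomial integral, use the evenness of $y$ as a function of $x$ along $L_h^\pm$ to kill the odd-$i$ $dx$-terms and even-$i$ $dy$-terms, and then reindex. The only differences are presentational: the paper writes out the cube-root parametrization and the constants $\hat d_{2k,j}^\pm,\hat c_{2k+1,j}^\pm$ explicitly (which is needed if you want to match the exact formulas in \eqref{xi} rather than merely the form of \eqref{Melikov2}), and it converts the $dy$-integrals to $dx$-integrals via $dy=\tfrac{b_1x}{a_1y^2}\,dx$ rather than working with $(Y^\pm)'$. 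One small caveat: your $Y^\pm$ are even but not smooth at $u=\pm1$ (they behave like $(1-u^2)^{1/3}$), so ``continuous even'' would be more accurate; the resulting endpoint singularities in the $dy$-integrals are nevertheless integrable, so no harm is done.
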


\begin{proof}

Firstly, we compute that $H_x^+(A)=-b_1 \sqrt{\frac{h}{b_1}}$, $H_x^-(A)= -\sqrt{\frac{h}{b_1}}$,
$H_x^-(B)=\sqrt{\frac{h}{b_1}}$ and $H_x^+(B)=b_1\sqrt{\frac{h}{b_1}}$ in \eqref{Melikov}.

 Restricted on $L_h^+$ and $L_h^-$, we solve $y= \sqrt[3]{\frac{3}{2a_1}(-h+ b_1 x^2)}$
 and   $y=\sqrt[3]{ \frac{3}{2\tilde{a}_1}(-\frac{h}{b_1}+ x^2)}$, respectively.
 Then for $i, j \in \mathbb{N}$ we calculate
 \begin{eqnarray}
  \label{dx+}
  \begin{array}{lll}
  \int_{L_h^+}    ~ x^i y^j dx
    & =  (\frac{3}{2a_1})^{\frac{j}{3}} \int_{\sqrt{\frac{h}{b_1}}}^{-\sqrt{\frac{h}{b_1}}}    x^i (-h+ b_1 x^2)^{\frac{j}{3}} dx
  \\
 & = -2  (\frac{3}{2a_1})^{\frac{j}{3}} \int_0^{\sqrt{\frac{h}{b_1}}}    x^{2k} (-h+ b_1 x^2)^{\frac{j}{3}}    dx
 \\
 &= \hat{d}_{2k,j}^+  ~h^{k+\frac{j}{3}+\frac{1}{2}},
% \\
% & = 2  (\frac{3}{2a_1})^{\frac{j}{3}} (\frac{h}{b_1})^{k+\frac{j}{3}+\frac{1}{2}}  \int_0^1    x^{2k} (1- x^2)^{\frac{j}{3}}    dx,
 \end{array}
\end{eqnarray}
%%%%
\begin{eqnarray}
  \label{dy+}
  \begin{array}{lll}
  \int_{L_h^+}    ~ x^i y^j dy &= \frac{b_1}{a_1} \int_{L_h^+}    ~ x^{i+1} y^{j-2} dx
  \\
    & =  \frac{b_1}{a_1} (\frac{3}{2a_1})^{\frac{j-2}{3}} \int_{\sqrt{\frac{h}{b_1}}}^{-\sqrt{\frac{h}{b_1}}}    x^{i+1} (-h+ b_1 x^2)^{\frac{j-2}{3}} dx
  \\
 & = -2  \frac{b_1}{a_1} (\frac{3}{2a_1})^{\frac{j-2}{3}} \int_0^{\sqrt{\frac{h}{b_1}}}    x^{2k+2} (-h+ b_1 x^2)^{\frac{j-2}{3}}    dx
\\
 &= \hat{c}_{2k+1,j}^+  ~h^{k+\frac{j+1}{3}+\frac{1}{2}},
 % \\
% & = 2  \frac{b_1}{a_1}  (\frac{3}{2a_1})^{\frac{j-2}{3}} (\frac{h}{b_1})^{k+\frac{j+1}{3}+\frac{1}{2}}  \int_0^1    x^{2k+2} (1- x^2)^{\frac{j-2}{3}}    dx,
 \end{array}
\end{eqnarray}
%%%
\begin{eqnarray}
  \label{dx-}
  \begin{array}{lll}
  \int_{L_h^-}    ~ x^i y^j dx
    & =  (\frac{3}{2\tilde{a}_1})^{\frac{j}{3}} \int_{-\sqrt{\frac{h}{b_1}}}^{\sqrt{\frac{h}{b_1}}}    x^i (x^2-\frac{h}{b_1})^{\frac{j}{3}} dx
  \\
 & = 2  (\frac{3}{2\tilde{a}_1})^{\frac{j}{3}} \int_0^{\sqrt{\frac{h}{b_1}}}    x^{2k} (x^2-\frac{h}{b_1})^{\frac{j}{3}}    dx
 \\
 &= \hat{d}_{2k,j}^-  ~h^{k+\frac{j}{3}+\frac{1}{2}}
 %\\
% & = -2  (\frac{3}{2\tilde{a}_1})^{\frac{j}{3}} (\frac{h}{b_1})^{k+\frac{j}{3}+\frac{1}{2}}  \int_0^1    x^{2k} (1- x^2)^{\frac{j}{3}}    dx
 \end{array}
\end{eqnarray}
and
\begin{eqnarray}
  \label{dy-}
  \begin{array}{lll}
  \int_{L_h^-}    ~ x^i y^j dy &= \frac{1}{\tilde{a}_1} \int_{L_h^-}    ~ x^{i+1} y^{j-2} dx
  \\
   & =  \frac{1}{\tilde{a}_1} (\frac{3}{2\tilde{a}_1})^{\frac{j-2}{3}} \int_{-\sqrt{\frac{h}{b_1}}}^{\sqrt{\frac{h}{b_1}}}
   x^{i+1} (x^2-\frac{h}{b_1})^{\frac{j-2}{3}} dx
  \\
 & = 2  \frac{1}{\tilde{a}_1} (\frac{3}{2\tilde{a}_1})^{\frac{j-2}{3}} \int_0^{\sqrt{\frac{h}{b_1}}}    x^{2k+2} (x^2-\frac{h}{b_1})^{\frac{j-2}{3}}    dx
  \\
 &= \hat{c}_{2k+1,j}^-  ~h^{k+\frac{j+1}{3}+\frac{1}{2}}
 %\\
% & = -2  \frac{1}{\tilde{a}_1}  (\frac{3}{2\tilde{a}_1})^{\frac{j-2}{3}} (\frac{h}{b_1})^{k+\frac{j+1}{3}+\frac{1}{2}}  \int_0^1    x^{2k+2} (1- x^2)^{\frac{j-2}{3}}    dx,
 \end{array}
\end{eqnarray}
where $k\in \mathbb{N}$,
\begin{eqnarray}
  \label{hat-cd}
  \begin{array}{lll}
  \hat{d}_{2k,j}^+  &=
   -2  (\frac{3}{2a_1})^{\frac{j}{3}} (\frac{1}{b_1})^{k+\frac{1}{2}}  \int_0^1    x^{2k} (-1+ x^2)^{\frac{j}{3}}    dx,
   \\
     \hat{c}_{2k+1,j}^+
   & = -2  \frac{b_1}{a_1}  (\frac{3}{2a_1})^{\frac{j-2}{3}} (\frac{1}{b_1})^{k+1+\frac{1}{2}}  \int_0^1    x^{2k+2} (-1+ x^2)^{\frac{j-2}{3}}    dx,
 \\
  \hat{d}_{2k,j}^-  & =  2  (\frac{3}{2\tilde{a}_1})^{\frac{j}{3}} (\frac{1}{b_1})^{k+\frac{j}{3}+\frac{1}{2}}  \int_0^1    x^{2k} (-1+ x^2)^{\frac{j}{3}}    dx,
 \\
   \hat{c}_{2k+1,j}^-
  & =  2  \frac{1}{\tilde{a}_1}  (\frac{3}{2\tilde{a}_1})^{\frac{j-2}{3}} (\frac{1}{b_1})^{k+\frac{j+1}{3}+\frac{1}{2}}
  \int_0^1    x^{2k+2} (-1+ x^2)^{\frac{j-2}{3}}    dx.
 \end{array}
\end{eqnarray}
%%%
Notice that $\hat{d}_{2k,j}^+  \hat{c}_{2k+1,j}^+ \hat{d}_{2k,j}^- \hat{c}_{2k+1,j}^- \ne 0$,  $\int_{L_h^{\pm}}    ~ x^i y^j dx=0$ for odd $i$ and  $\int_{L_h^{\pm}}    ~ x^i y^j dy=0$ for even $i$.

Substituting \eqref{dx+}--\eqref{dy-} into the formula \eqref{Melikov}, we obtain the
Melnikov function   of system \eqref{I-epsilon} as

 \begin{align*}
 \label{Mel+}
 M(h, \epsilon) & =b_1  \Big(\frac{1}{b_1} \int_{L_h^+} (g^+dx-f^+dy) +\int_{L_h^-} (g^-dx-f^-dy) \Big),
     \\
     & = \int_{L_h^+} \Big(\sum_{i+j=0}^n d_{ij}^+ ~ x^i y^j dx-\sum_{i+j=0}^n c_{ij}^+ ~ x^i y^jdy \Big)
     \\
     & \qquad  +b_1 \int_{L_h^-} \Big(\sum_{i+j=0}^n d_{ij}^- ~ x^i y^j dx-\sum_{i+j=0}^n c_{ij}^- ~ x^i y^jdy \Big)
 \nonumber
 \\
  & =   h^{\frac{1}{2}} \Big(\sum_{2k+j=0}^n d_{2k, j}^+ ~   \hat{d}_{2k,j}^+  ~h^{k+\frac{j}{3}}
  -\sum_{2k+1+j=0}^n c_{2k+1, j}^+ ~ \hat{c}_{2k+1,j}^+  ~h^{k+\frac{j+1}{3}} \Big)
   \\
    & \qquad  +b_1  h^{\frac{1}{2}}  \Big(\sum_{2k+j=0}^n d_{2k, j}^- ~ \hat{d}_{2k,j}^-  ~h^{k+\frac{j}{3}}
    -\sum_{2k+1+j=0}^n c_{2k+1,j}^- ~\hat{c}_{2k+1,j}^-  ~h^{k+\frac{j+1}{3}} \Big)
    \nonumber
 \\
   & =   h^{\frac{1}{2}} \Big(\sum_{2k+j=0}^n d_{2k, j}^+ ~   \hat{d}_{2k,j}^+  ~h^{k+\frac{j}{3}}
  -\sum_{2k+j=0}^n c_{2k+1, j-1}^+ ~ \hat{c}_{2k+1,j-1}^+  ~h^{k+\frac{j}{3}} \Big)
   \\
    & \qquad  +b_1  h^{\frac{1}{2}}  \Big(\sum_{2k+j=0}^n d_{2k, j}^- ~ \hat{d}_{2k,j}^-  ~h^{k+\frac{j}{3}}
    -\sum_{2k+j=0}^n c_{2k+1,j-1}^- ~\hat{c}_{2k+1,j-1}^-  ~h^{k+\frac{j}{3}} \Big)
    \nonumber
   \\
   &= h^{\frac{1}{2}} \sum_{2k+j=0}^n  \xi_{2k,j} ~h^{k+\frac{j}{3}},
   %\\
%   & =   h^{\frac{1}{2}} \sum_{2k+j=0}^n   ( d_{2k, j}^+ ~   \hat{d}_{2k,j}^+
%  -  c_{2k+1, j-1}^+ ~ \hat{c}_{2k+1,j-1}^+   +b_1   d_{2k, j}^- ~ \hat{d}_{2k,j}^-
%    -b_1 c_{2k+1,j-1}^- ~\hat{c}_{2k+1,j-1}^- )  ~h^{k+\frac{j}{3}}
%    \nonumber
 \end{align*}
where $k\in \mathbb{N} $,
 \begin{eqnarray}
  \label{xi}
  \begin{array}{lll}
  \xi_{2k,j}  &= &    d_{2k, j}^+ ~   \hat{d}_{2k,j}^+
  -  c_{2k+1, j-1}^+ ~ \hat{c}_{2k+1,j-1}^+
  \\
  & &~+b_1   d_{2k, j}^- ~ \hat{d}_{2k,j}^-
    -b_1 c_{2k+1,j-1}^- ~\hat{c}_{2k+1,j-1}^-
     \end{array}
\end{eqnarray}
and  $\hat{d}_{2k,j}^+,  \hat{c}_{2k+1,j}^+, \hat{d}_{2k,j}^-, \hat{c}_{2k+1,j}^- $ are displayed in \eqref{hat-cd}.
Therefore, \eqref{Melikov2} is proved.
\end{proof}

%%%%%%%%%%%%%%%%%%%%%%%

In order to determine how many limit cycles  the piecewise smooth  system \eqref{I-epsilon} can have, we analyze
zeros of  Melnikov function  \eqref{Melikov2}. For convenience, we set $h=\hat{h}^6$ and get from  \eqref{Melikov2}
that
\begin{equation}
 \label{Melikov3}
 M(\hat{h}, \epsilon) =   \hat{h}^3 \sum_{2k+j=0}^n  \xi_{2k,j} ~\hat{h}^{6k+2j}
  =\hat{h}^3 \sum_{i+j=0}^n  \xi_{i,j} ~\hat{h}^{3i+2j},
\end{equation}
where $i$ is even.

The zero problem of $M(h, \epsilon)$ is transferred to determine
the cardinal of the set
 \begin{equation*}
 \label{N0}
\mathcal{S}(n)=\{3i+2j: ~ 0\le i+j\le n,   ~i ~~\mbox{even}, ~i, j \in\mathbb{N}  \}.
\end{equation*}
That is, we need to find how many different elements exist in the set
$ \mathcal{S}(n)$.

We denote a trapezoid by
$$
\Upsilon_1(n)=\{(i, j): ~ 0\le i+j\le n,  ~j<3, ~i ~~\mbox{even}, ~i, j \in\mathbb{N} \},
$$
and a set by
$$
\Upsilon(n)=\{3i+2j: ~ 0\le i+j\le n,  ~j<3, ~i ~~\mbox{even}, ~i, j \in\mathbb{N} \}.
$$
The cardinal of the set $ \mathcal{S}(n)$ is given in the following lemma by applying a similar ideal in \cite{GinGL2015}
for  smooth quasi--homogeneous polynomial differential systems.

%%%%%%%%%%%%%%%%%%%%%%%%%%%%%%%%%%%%%%

\begin{lemma}
\label{lm-n}
$\mathcal{S}(n)= \Upsilon(n)$.
\end{lemma}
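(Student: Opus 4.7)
The plan is to prove the two inclusions $\Upsilon(n)\subseteq \mathcal{S}(n)$ and $\mathcal{S}(n)\subseteq \Upsilon(n)$. The first inclusion is immediate from the definitions, since $\Upsilon(n)$ is obtained from $\mathcal{S}(n)$ by imposing the additional restriction $j<3$ on the index set while keeping the expression $3i+2j$ unchanged. So the real content is the reverse inclusion.

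For $\mathcal{S}(n)\subseteq \Upsilon(n)$, the key observation is the algebraic identity
\begin{equation*}
3i+2j = 3(i+2) + 2(j-3),
\end{equation*}
which allows us to trade three units of $j$ for two units added to $i$ without changing the value $3i+2j$. Crucially, this reduction enjoys three good properties: (a) it preserves the parity of $i$, so $i+2$ is still even whenever $i$ is; (b) it strictly decreases $i+j$ by one; (c) it only requires $j\ge 3$ to produce a valid pair of non-negative integers.

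Given any element $3i+2j\in \mathcal{S}(n)$ with $(i,j)$ satisfying $0\le i+j\le n$, $i$ even, $j\in\mathbb{N}$, I would iterate the reduction above until $j<3$. Formally, set $k=\lfloor j/3\rfloor$ and define
\begin{equation*}
i' = i + 2k, \qquad j' = j - 3k.
\end{equation*}
Then $j' = j \bmod 3\in\{0,1,2\}$, hence $j'<3$; $i'$ is even because $i$ is; and $i'+j' = i+j-k \le n-k \le n$. Moreover, $3i'+2j' = 3i+2j$ by construction, so $3i+2j\in \Upsilon(n)$.

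I do not expect any real obstacle here; the argument is a clean one-step combinatorial reduction modeled on the analogous computation in \cite{GinGL2015}. The only subtlety worth flagging explicitly is that the trade $(i,j)\mapsto (i+2,j-3)$ adds an \emph{even} amount to $i$, which is exactly why the parity constraint ``$i$ even'' is preserved throughout the reduction and therefore does not produce extra elements in $\mathcal{S}(n)\setminus\Upsilon(n)$.
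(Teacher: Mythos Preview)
Your proof is correct and essentially identical to the paper's: the paper also notes that $\Upsilon(n)\subseteq\mathcal{S}(n)$ is trivial and, for the reverse inclusion, takes $\iota=\lfloor j/3\rfloor$ (your $k$) and passes from $(i,j)$ to $(i+2\iota,\,j-3\iota)$, checking parity of $i$, the bound $i_1+j_1=i+j-\iota\le n$, and invariance of $3i+2j$. The only cosmetic difference is that you phrase the reduction first as an iterated step $(i,j)\mapsto(i+2,j-3)$ before writing the closed form.
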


\begin{proof}
Obviously we have $\mathcal{S}(n) \supset \Upsilon(n)$. We only need to prove that $\mathcal{S}(n) \subset \Upsilon(n)$.

Let $3i+2j$ be an arbitrary element in $\mathcal{S}(n)$ such that $j\ge3$ and $i$ is even.  Then there exists $\iota\in \mathbb{Z}_+$
satisfying $3 \iota \le j< 3(\iota+1)$. We have
$$
3i+2j=3(i+2\iota)+2(j-3\iota)=3i_1+2j_1,
$$
where even $i_1=i+2\iota\ge0$, $0\le j_1=j-3\iota<3$ and $0\le i_1+j_1=i+j-\iota\le n$, implying  $3i_1+2j_1\in \Upsilon(n)$ and $3i+2j \in \Upsilon(n)$.
Hence  $\mathcal{S}(n) \subset \Upsilon(n)$.
 \end{proof}

 Remark that from Lemma \ref{lm-n}   all the values of $3i+2j$ for the degrees of $\hat{h}$ in \eqref{Melikov3}
  are taken exactly   by  points  on the trapezoid  $\Upsilon_1(n)$.

  \medskip

%%%%%%%%%%%%%%%%%%%%%%%%%%%%%%%%%%%%%%%%%%%%%%%%%%%%%%%%%%%%%%%%%%%%%%%%%%

We need use the following version of the Descartes Theorem
proved in \cite{Berezin} to judge  real zeros of the  Melnikov function.

\begin{theorem}[Descartes theorem]\label{Dth}
Consider the real polynomial $q(x) = a_{i_1} x^{i_1} + a_{i_2}
x^{i_2} + \ldots +a_{i_r} x^{i_r}$ with $0 = i_1 < i_2 < \ldots <
i_r$. If $a_{i_j}a_{i_{j+1}} < 0,$ we say that we have a variation
of sign. If the number of variations of signs is $m,$ then the
polynomial $q(x)$ has at most $m$ positive real roots. Furthermore,
always we can choose the coefficients of the polynomial $q(x)$ in
such a way that $q(x)$ has exactly $r-1$ positive real roots.
\end{theorem}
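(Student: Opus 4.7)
The Descartes theorem consists of two assertions: (i) the upper bound, that $q(x)$ has at most $m := V(q)$ positive real roots counted with multiplicity; and (ii) the realizability statement, that one can choose the coefficients so that $q(x)$ has exactly $r - 1$ positive real roots. My plan is to treat the two parts separately, and to use (i) as an ingredient in the proof of (ii).

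For (i), I would proceed by induction on $\deg q$. The base case of a monomial is immediate, since both sides vanish. In the inductive step, Rolle's theorem yields that if $q$ has $p$ positive real roots with multiplicity, then $q'$ has at least $p - 1$. The derivative $q'(x) = \sum_{k} i_k a_{i_k} x^{i_k - 1}$ has nonzero coefficients sharing the signs of $a_{i_k}$ (as $i_k \geq 0$), so the sign pattern of $q'$ is obtained from that of $q$ by possibly dropping the first entry (when $i_1 = 0$); hence $V(q') \in \{V(q), V(q) - 1\}$. Combining with the inductive hypothesis gives $p - 1 \leq V(q') \leq V(q)$, i.e.\ $p \leq V(q) + 1$. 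The tighter bound $p \leq V(q)$ then follows from the parity identity $V(q) \equiv p \pmod 2$, which I would establish by comparing $\mathrm{sign}(a_{i_1})$ and $\mathrm{sign}(a_{i_r})$: these control the signs of $q$ near $0^+$ and near $+\infty$, and the intermediate value theorem ties that sign pattern to the parity of $p$.

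For (ii), I would fix arbitrary $0 < \alpha_1 < \alpha_2 < \cdots < \alpha_{r-1}$ and solve the linear system $\sum_{k=1}^{r} a_{i_k} \alpha_\ell^{i_k} = 0$ for $\ell = 1, \ldots, r - 1$. The coefficient matrix $[\alpha_\ell^{i_k}]$ is a generalized Vandermonde of size $(r-1) \times r$; by a classical Schur-polynomial identity for distinct positive bases and distinct nonnegative integer exponents, every one of its $(r-1) \times (r-1)$ minors is strictly positive, so the kernel is one-dimensional. Any nonzero kernel vector gives a polynomial $q$ with the prescribed support vanishing at each $\alpha_\ell$. The upper bound (i) then forces $r - 1 \leq p \leq V(q) \leq r - 1$, so $p = V(q) = r - 1$: the coefficients must alternate strictly in sign (in particular none of them can vanish, so $q$ really does have the full support $\{i_1, \ldots, i_r\}$), the $\alpha_\ell$ are all simple, and no further positive roots can occur.

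The main obstacle I anticipate is the parity refinement in (i): the naive induction only yields $p \leq V(q) + 1$, and upgrading this to $p \leq V(q)$ requires the mod-$2$ argument together with careful multiplicity bookkeeping in Rolle's theorem, including the case $i_1 > 0$ where $0$ is itself a root of $q$ that must be excluded from the count $p$. The sharpness in (ii) is then a rather quick consequence once (i) is in hand and the positivity of the generalized Vandermonde minors is invoked.
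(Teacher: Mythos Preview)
The paper does not supply its own proof of Theorem~\ref{Dth}: the result is quoted from \cite{Berezin} and invoked as a black box in the proof of Theorem~\ref{th-cycle}. There is therefore no in-paper argument to compare your proposal against.

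Your outline is nonetheless correct and self-contained. For part~(i), the induction via Rolle is the standard route, and you have correctly isolated the one nontrivial point: the raw induction only yields $p\le V(q)+1$, and it is the parity identity $p\equiv V(q)\pmod 2$ (read off from the signs of $q$ near $0^+$ and near $+\infty$, governed by $a_{i_1}$ and $a_{i_r}$) that closes the gap. Since the theorem as stated has $i_1=0$, the subcase $i_1>0$ you worry about does not actually arise here; in fact when $i_1>0$ one gains an extra positive root of $q'$ from Rolle on $(0,\beta_1)$, which gives $p\le V(q')=V(q)$ directly without parity, so that case is strictly easier. For part~(ii), your generalized-Vandermonde construction is clean: the positivity of every $(r-1)\times(r-1)$ minor of $[\alpha_\ell^{\,i_k}]$ for $0<\alpha_1<\cdots<\alpha_{r-1}$ and $0\le i_1<\cdots<i_r$ is classical (e.g.\ via the Schur-polynomial factorization), and feeding the resulting $q$ back into part~(i) to force $r-1\le p\le V(q)\le r-1$ simultaneously pins down $p=r-1$, certifies that all $r$ coefficients are nonzero (so the support is exactly $\{i_1,\ldots,i_r\}$), and shows the prescribed roots are simple. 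Nothing is missing.
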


Let $\Xi(n)$ denote the maximal number of limit cycles, which  are produced in  piecewise smooth
system  \eqref{I-epsilon} and  bifurcated from the period solutions of  piecewise smooth  quadratic quasi--homogeneous  system $(I)$
by taking into account the zeros of the first order Melnikov function.
%when   $a_1<0, b_1>0$ and $\tilde{a}_1>0$.

\begin{theorem}
\label{th-cycle}
For piecewise smooth quadratic quasi--homogeneous  system $(I)$ perturbed inside the class of all piecewise smooth  polynomial differential
systems of degree  $n$ when $a_1<0, b_1>0$ and $\tilde{a}_1>0$,  the  number  $\Xi(n)=2[\frac{n+1}{2}]+[\frac{n-1}{2}]-1$
(resp.  $\Xi(n)=2[\frac{n}{2}]+[\frac{n+2}{2}]-1$)
 if $n$ is odd (resp. even) by the first order Melnikov function.
 %%%
 Moreover, there exist   perturbations of  piecewise smooth  polynomial
systems of degree  $n$ in \eqref{I-epsilon} with exactly $\Xi(n)$  limit cycles.
\end{theorem}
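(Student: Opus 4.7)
The plan is to reduce the limit cycle counting to an algebraic root-counting problem for the first order Melnikov function, and then apply the Descartes theorem. Start from Lemma \ref{lm-Mel}, which gives
\[
M(h,\epsilon)=\sum_{2k+j=0}^{n}\xi_{2k,j}\,h^{k+\frac{j}{3}+\frac{1}{2}}.
\]
After the substitution $h=\hat h^{6}$ as in \eqref{Melikov3}, $M$ becomes $\hat h^{3}$ times a generalized polynomial in $\hat h$ whose set of exponents is exactly
\[
\mathcal{S}(n)=\{3i+2j:0\le i+j\le n,\ i\text{ even},\ i,j\in\mathbb{N}\}.
\]
By Lemma \ref{lm-n}, $\mathcal{S}(n)=\Upsilon(n)$, so I may restrict attention to the trapezoid $\Upsilon_1(n)$, i.e.\ to $j\in\{0,1,2\}$.

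The next step is to count $|\Upsilon(n)|$. The three subcases give exponents $\{3i:i\text{ even},\ i\le n\}$, $\{3i+2:i\text{ even},\ i\le n-1\}$, $\{3i+4:i\text{ even},\ i\le n-2\}$, which are pairwise disjoint because they are congruent to $0$, $2$, $4$ modulo $6$ respectively. Counting the even integers in each range and adding yields
\[
|\Upsilon(n)|=
\begin{cases}
3m+2, & n=2m+1,\\
3m+1, & n=2m,
\end{cases}
\]
so $|\Upsilon(n)|-1=2\lfloor(n+1)/2\rfloor+\lfloor(n-1)/2\rfloor-1$ when $n$ is odd and $|\Upsilon(n)|-1=2\lfloor n/2\rfloor+\lfloor(n+2)/2\rfloor-1$ when $n$ is even, matching $\Xi(n)$.

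For the upper bound, after collecting coefficients with the same power of $\hat h$, the bracket in \eqref{Melikov3} is a real polynomial in $\hat h$ with exactly $|\Upsilon(n)|$ monomials. By the Descartes theorem (Theorem \ref{Dth}), it admits at most $|\Upsilon(n)|-1$ positive real roots; each simple positive root of $M$ gives rise, by the standard first order Melnikov bifurcation theory for piecewise smooth near-Hamiltonian systems, to one limit cycle of \eqref{I-epsilon} bifurcating from the period annulus of the global center. This yields $\Xi(n)\le|\Upsilon(n)|-1$.

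The sharpness (realizability) part is the step I expect to be the main obstacle, and I would handle it as follows. The coefficients $\hat d^{\pm}_{2k,j}$ and $\hat c^{\pm}_{2k+1,j}$ in \eqref{hat-cd} are all nonzero under $a_1<0$, $b_1>0$, $\tilde a_1>0$; hence by \eqref{xi} the parameter $\xi_{2k,j}$ depends linearly on the four independent perturbation coefficients $d^{\pm}_{2k,j}$, $c^{\pm}_{2k+1,j-1}$, and each $\xi_{2k,j}$ can be prescribed as an arbitrary real number independently of the others. When several pairs $(2k,j)$ with $2k+j\le n$ produce the same value of $3\cdot 2k+2j\in\Upsilon(n)$, I zero out all but one representative (the one with $j<3$ provided by the reduction in the proof of Lemma \ref{lm-n}); this leaves $|\Upsilon(n)|$ independent scalar parameters $\xi_{2k,j}$, one per element of $\Upsilon(n)$. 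By the second assertion of Theorem \ref{Dth}, the signs of these parameters can be chosen so that the resulting polynomial in $\hat h$ has exactly $|\Upsilon(n)|-1$ simple positive roots; each such root gives a hyperbolic limit cycle of \eqref{I-epsilon} for $|\epsilon|$ small, completing the lower bound $\Xi(n)\ge|\Upsilon(n)|-1$ and hence the theorem.
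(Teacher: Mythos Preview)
Your proposal is correct and follows essentially the same route as the paper: reduce via Lemma~\ref{lm-Mel} and the substitution $h=\hat h^{6}$ to the exponent set $\mathcal{S}(n)=\Upsilon(n)$, count $|\Upsilon(n)|$ by splitting over $j\in\{0,1,2\}$, bound the number of positive roots by Descartes (Theorem~\ref{Dth}), and realize the bound by showing the collected coefficients can be chosen freely. Your mod~$6$ disjointness argument is a slightly slicker variant of the paper's divisibility argument for injectivity on $\Upsilon_1(n)$, and your ``zero out all but one representative'' step is equivalent to the paper's observation that the Jacobian of the collected coefficients with respect to the perturbation parameters has full row rank.
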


\begin{proof}
From the expression of the first order Melnikov function \eqref{Melikov3} and   Theorem \ref{Dth},
we get that $\Xi(n)$ is equal to   $|\mathcal{S}(n)|-1$, where $|\mathcal{S}(n)|$ is the cardinal of the set $\mathcal{S}(n)$.
Applying Lemma  \ref{lm-n}  we have  $\Xi(n)=|\Upsilon(n)|-1$.

Besides, all the values of the function $3i+2j$ are different for different points on the trapezoid $\Upsilon_1(n)$.
In fact, if $3i+2j=3\tilde{i}+2\tilde{j}$ for both $(i,j)$ and $(\tilde{i},\tilde{j})$ in $\Upsilon_1(n)$, we have
$3(i-\tilde{i})= 2(\tilde{j}-j)$, yielding that $3|(\tilde{j}-j)$. Because $0\le\tilde{j},  j<3$, we get $\tilde{j}-j=0$
and furthermore  $i=\tilde{i}$.
Therefore, each of all   values of the set $\Upsilon(n)$ is taken exactly once by one point on the trapezoid $\Upsilon_1(n)$.

Taking $j=0, 1, 2$ respectively, we calculate
$$
|\Upsilon(n)|= [\frac{n+1}{2}]+[\frac{n+1}{2}]+[\frac{n-1}{2}] = 2[\frac{n+1}{2}]+[\frac{n-1}{2}]
$$
if  $n$ is odd
and
$$
|\Upsilon(n)|= [\frac{n+2}{2}]+[\frac{n}{2}]+[\frac{n}{2}] =2[\frac{n}{2}]+[\frac{n+2}{2}]
$$
 if $n$ is even. Thus, the formula of $\Xi(n)$ in this theorem is proved.

 In addition, we notice that  the Melnikov function \eqref{Melikov3} has $\Lambda:=|\Upsilon(n)|$ terms with different degrees of $\hat{h}$,
 whose coefficients are denoted by
 $$
 \xi_1, ~\xi_2, ...,~ \xi_{\Lambda}.
 $$
These coefficients are linear combinations of parameters
$d_{2k,j}^+,  c_{2k+1,j-1}^+, d_{2k,j}^-$ and $c_{2k+1,j-1}^- $, as seen in \eqref{xi}.
It reveals that   the matrix
$$
\frac{\partial(\xi_1, ~\xi_2, ...,~ \xi_{\Lambda})}
{\partial( c_{1,0}^+,c_{3,0}^+,..., c_{2k+1,j-1}^+,..., c_{2k+1,j-1}^-,..., d_{2k,j}^+,...,d_{2k,j}^-,...)}
$$
 has a full row rank. So there exists an array
 $$
 (c_{1,0}^+,c_{3,0}^+,..., c_{2k+1,j-1}^+, ..., c_{2k+1,j-1}^-,..., d_{2k,j}^+,...,d_{2k,j}^-,...)
 $$
 satisfying that the Melnikov function  $M(\hat{h}, \epsilon)$  in \eqref{Melikov3} has  $\Lambda-1$ variations of signs. By Theorem \ref{Dth},
 the   function $M(\hat{h}, \epsilon)$  has exactly $\Xi(n)$ positive zeros.
 Then we obtain that the Melnikov function  in \eqref{Melikov2}  has exactly $\Xi(n)$ positive zeros and
  piecewise smooth  system \eqref{I-epsilon}  has $\Xi(n)$  limit cycles bifurcating from the periodic solutions of
the piecewise smooth  quadratic quasi--homogeneous center of  system $(I)$    by using the first order Melnikov function.
 \end{proof}

%%%%%%%%%%%%%%%%%%%%%%%%%%%%%%%%%%%%%%

\section*{Acknowledgements}

The  author has received funding from the European Union's Horizon 2020 research and innovation
programme under the Marie Sklodowska-Curie grant agreement No 655212, and is  partially supported by the National Natural
Science Foundation of China (No. 11431008).
The  author thanks Professor Valery   Romanovski for fruitful discussions on the work.

%%%%%%%%%%%%%%%%%%%%%%%%%%%%%%%%%%%%%%%%%%%%%%%%%%%%%%%%%%%%%%%%%%%%%%%

%{\bf Acknowledgment}: The authors are grateful to the referees
%for their careful checking and helpful comments.


\begin{thebibliography}{79}

%1.Algaba, A., Freire, E., Gamero, E., Garc¡ä?a, C.: Monodromy,
%center-focus and integrability problems for quasihomogeneous
%polynomial systems. Nonlinear Anal. 72,
%1726¨C1736 (2010)



\bibitem{Alga2012} A. Algaba, N. Fuentes, C. Garc\'ia, Center of quasihomogeneous
polynomial planar systems, {\it Nonlinear Anal.
Real World Appl.} {\bf 13} (2012), 419--431.

%\bibitem{Ata2012} A. Atabaigi, H. R. Z. Zangeneh and R. Kazemi,
%%Limit cycle bifurcation by perturbing a cuspidal loop of order 2 in a
%%Hamiltonian system, {\it Nonlinear Analysis} {\bf 75} (2012), 1945-1958.

\bibitem{Alga2009} A. Algaba, E. Gamero, C. Garc\'ia, The integrability
problem for a class of planar systems, {\it Nonlinearity} {\bf 22} (2009), 396--420.

%\bibitem{Alga2011} Algaba, A., Garc\'ia, C., Reyes, M.,  Integrability of two dimensional
%quasi--homogeneous polynomial differential systems,
% {\it Rocky Mountain J. Math.} {\bf 41} (2011), 1-22.

%\bibitem{Alga2010} Algaba A., Garc\'ia C., Reyes M., Rational integrability of two dimensional quasi--homogeneous polynomial differential
%systems, {\it Nonlinear Anal.} {\bf 73} (2010), 1318--1327.

\bibitem{AndLGM1973} A. A. Andronov, E. A. Leontovitch, I. I. Gordon, A. G. Maier, {\it Qualitative Theory of
Second-Order Dynamic Systems},  Israel Program for Scientific Translations,  John Wiley and Sons, New York, 1973.


\bibitem{AVK}  A. Andronov, A. Vitt, S. Khaikin,
{\it Theory of Oscillations}, Pergamon Press, Oxford, 1966.

\bibitem{Aziz2014} W. Aziz,  J. Llibre, C. Pantazi, Centers of quasi--homogeneous polynomial differential equations of degree three,
{\it Adv. Math.} {\bf 254} (2014), 233--250.

%\bibitem{Baut1954} Bautin N. N., On the number of limit cycles which appear with the variation of coefficients
%from an equilibrium position of focus or center type, {\it Mat. Sb.} {\bf 30} (1925), 181--196,
%{\it Amer. Math. Soc. Transl.} {\bf 100} (1954), 1--19.

\bibitem{Berezin}   I.S. Berezin, N.P. Zhidkov,
\emph{Computing Methods}, Volume II, Pergamon Press, Oxford, 1964.

\bibitem{BBCK}
M. di Bernardo, C.J. Budd, A.R. Champneys, P. Kowalczyk, {\it Piecewise-smooth
Dynamical Systems: Theory and Applications}, Springer-Verlag, London, 2008.

\bibitem{BBCKNTP}
M. di Bernardo, C.J. Budd, A.R. Champneys, P. Kowalczyk, A. Nordmark, G. Tost,
 P. Piiroinen, Bifurcations in nonsmooth
dynamical systems, {\it SIAM Review} {\bf 50} (2008), 629-701.

%\bibitem{Bri1856} Briot Ch., Bouquet J. C., Recherches sur les
%     propri$\acute{e}$t$\acute{e}$s des fonctions d$\acute{e}$finies par
%     des $\acute{e}$quations diff$\acute{e}$rentielles,
%     {\it J. $\acute{E}$cole Imp. Polyt.} {\bf 21}(1856), 133--197.

     \bibitem{Buz}  C. Buzzi, C. Pessoa, J. Torregrosa,
Piecewise linear perturbations of a linear center, {\it Discrete Contin. Dyn. Syst.}  {\bf 33} (2013), 3915--3936.


%\bibitem{Cair2007} Cair\'o L., Llibre J., Polynomial first integrals for weighthomogeneous
%planar polynomial differential systems of weight degree 3,  {\it J. Math. Anal. Appl.} {\bf 331} (2007), 1284--1298.

\bibitem{ChenR2015} X. Chen, V. Romanovski, W. Zhang,  Degenerate Hopf bifurcations in a family of FF-type switching systems,
{\it J. Math. Anal. Appl.} {\bf 432} (2015), 1058--1076.

%\bibitem{ChowHale}  Chow S. N. and Hale J. K.,  {\it Methods of Bifurcation Theory}, Springer--Verlag, Berlin, 1982.
%
%
%\bibitem{Cima1990} Cima A., Llibre J., Algebraic and topological classification of the
%homogeneous cubic systems in the plane, {\it J. Math. Anal. Appl.} {\bf  147} (1990),
%420--448.

%\bibitem{Date1979} Date T., Classification and analysis of two-dimensional homogeneous
%quadratic differential equations systems, {\it J. Diff. Eqns.} {\bf 32} (1979), 311--334.

\bibitem{DumoLA2006} F. Dumortier, J. Llibre, J.C. Art\'es, {\it Qualititive theory
of planar differential systems}, Springer--Verlag, Berlin, 2006.

\bibitem{Filippov}
A.F. Filippov, {\it Differential Equations with Discontinuous Right-Hand Sides}, Kluwer Academic, Dordrecht, 1988.

\bibitem{Freire12}
E. Freire, E. Ponce, F. Torres, Canonical discontinuous planar piecewise linear system, {\it SIAM J. Appl. Dyn. Syst.} {\bf 11} (2012), 181-211.



%\bibitem{Garc2003} Garc\'ia I.,  On the integrability of quasihomogeneous and
%related planar vector fields, {\it Intern. J. Bifur. Chaos} {\bf 13} (2003), 995--1002.

\bibitem{Garc2013}  B. Garc\'ia, J. Llibre, J.S. P\'erez del R\'io, Planar quasihomogeneous
polynomial differential systems and their integrability,
{\it J. Differential Equations} {\bf 255} (2013), 3185--3204.

\bibitem{Gavr2009} L. Gavrilov, J. Gin\'e, M. Grau, On the cyclicity of
weight-homogeneous centers, {\it J. Differential Equations} {\bf 246} (2009), 3126--3135.

\bibitem{GP}
F. Giannakopoulos, K. Pliete, Planar system of piecewise linear differential equations with a line of
discontinuity, {\it Nonlinearity} {\bf 14} (2001), 1611-1632.

\bibitem{GGL2013} J. Gin\'e, M. Grau, J. Llibre, Polynomial and rational first integrals
for planar quasi-homogeneous polynomial differential systems, {\it     Discrete Contin. Dyn. Syst.}  {\bf 33}  (2013), 4531--4547.

\bibitem{GinGL2015} J. Gin\'e, M. Grau, J. Llibre,  Limit cycles bifurcating from planar polynomial quasi-homogeneous centers,
{\it J. Differential Equations} {\bf 259} (2015), 7135--7160.


\bibitem{Gori1996} A. Goriely, Integrability, partial integrability, and nonintegrability for systems of ordinary differential equations,
{\it J. Math. Phys.} {\bf 37} (1996), 1871--1893.

\bibitem{HZ}   M. Han, W. Zhang,
On Hopf bifurcation in non-smooth planar systems, {\it J.
Differential Equation} {\bf 248} (2010), 2399--2416.


\bibitem{Hu2007} Y. Hu, On the integrability of quasihomogeneous systems
and quasidegenerate infinity systems, {\it  Adv. Difference
Eqns.} (2007), Art ID 98427, 10 pp.

\bibitem{Kunze}
M. Kunze, {\it Non-Smooth Dynamical Systems}, Springer-Verlag, Berlin-Heidelberg, 2000.

\bibitem{KRG}
Yu.A. Kuznetsov, S. Rinaldi, A. Gragnani, One-parameter bifurcations in planar Filippov systems, {\it Internat. J. Bifur. Chaos} {\bf 13} (2003), 2157-2188.


%\bibitem{Li1982} Li C.,  Two problems of planar quadratic systems (in Chinese), {\it  Science in China Math}  1982, no. 12, 1087--1096.

\bibitem{Li2009} W. Li, J. Llibre, J. Yang, Z. Zhang,  Limit cycles bifurcating from the period annulus of quasi--homegeneous
centers, {\it J. Dyn. Diff. Eqns.} {\bf 21} (2009), 133--152.

%\bibitem{LiangH2013} Liang F., Han, M., Zhang X., Bifurcation of limit cycles from generalized homoclinic loops in planar piecewise smooth systems,
%  {\it J. Diff. Eqns.} {\bf 255} (2013), 4403--4436.

\bibitem{LiaHR2012}  F. Liang,  M. Han, V. Romanovski, Bifurcation of limit cycles by perturbing a piecewise linear Hamiltonian system with a homoclinic loop,
 {\it Nonlinear Anal.} {\bf 75} (2012),  4355--4374.



\bibitem{Liang2014} H. Liang, J. Huang, Y. Zhao, Classification of global phase portraits of planar quartic
quasi--homogeneous polynomial differential systems, {\it  Nonlinear Dynam.}{\bf 78} (2014),  1659--1681.

\bibitem{LiuH2010} X. Liu, M. Han, Bifurcation of limit cycles by perturbing piecewise Hamiltonian systems,
{\it Internat. J. Bifur. Chaos} {\bf 20} (2010), 1379--1390.

%\bibitem{Llib1996} Llibre J., P\'erez del R\'io J. S., Rodr\'iguez  J. A., Structural
%stability of planar homogeneous polynomial vector fields. Applications to
%critical points and to infinity, {\it J. Diff. Eqns.} {\bf 125} (1996), 490--520.

\bibitem{LP}  J. Llibre, E. Ponce,
Three nested limit cycles in discontinuous piecewise linear
differential systems with two zones, {\it Dynam. Contin. Discrete
Impuls. Systems. Ser. B Appl. Algorithms} {\bf 19} (2011), 325--335.



\bibitem{Llib2002} J. Llibre, X. Zhang, Polynomial first integrals for quasihomogeneous
polynomial differential systems, {\it Nonlinearity} {\bf 15} (2002), 1269--1280.




\bibitem {ML} O. Makarenkov, J.S.W. Lamb,
Dynamics and bifurcations of nonsmooth systems: A survey,
{\it Physica D } {\bf 241} (2012), 1826--1844.


%\bibitem{Malk1964} Malkin K. E., Criteria for the center for a certain differential equation, {\it Volz. Mat. Sb. Vyp}
%{\bf 2} (1964), 87--91 (in Russian).


%\bibitem{MRT1995} Mardesic P., Rousseau C., Toni B., Linearization of isochronous centers, {\it J. Diff. Eqns.} {\bf 121} (1995), 67--108.

%\bibitem{New1978} Newton T. A., Two dimensional homogeneous quadratic differential systems, {\it SIAM Review} {\bf 20} (1978), 120--138.

%\bibitem{RS2009}  Romanovski V. G.,  Shafer D. S, {\it The Center and Cyclicity Problems
%A Computational Algebraic Approach}, Birkh\"{a}user,  Boston, 2009.

\bibitem{Reyn2007} J. Reyn, {\sl Phase portraits of planar quadratic systems},  Mathematics and Its Applications  {\bf 583},  Springer, New York, 2007


%\bibitem{Rous1995} Rousseau C., Schlomiuk D., Cubic vector fields with respect to a center, {\it J. Diff. Eqns.} {\bf 123} (1995), 388--436.

%\bibitem{Sans} Sansone G.,  Conti R., {\it Non-Linear Differential Equations},
%     Pergamon Press, New York, 1964.

%\bibitem{Schl1990} Schlomiuk D., Guckenheimer J., Rand R., Integrability of plane quadratic vector fields,
%{\it Expo. Math.} {\bf 8} (1990), 3--25.

%\bibitem{Sib1977} Sibirskii K. S., Vulpe N. I., Geometric classification of quadratic
%differential systems, {\it Differential Equations} {\bf 13} (1977), 548--556.

%\bibitem{TZ-NON} Tang Y.,  Zhang  W., Generalized normal sectors and orbits
% in expceptional directions, {\it Nonlinearity} {\bf 17} (2004), 1407--1426.

\bibitem{TWZ2015} Y. Tang,  L. Wang, X. Zhang, Center of planar quintic quasi--
homogeneous polynomial differential systems, {\it Discrete Contin. Dyn. Syst.}
{\bf 35} (2015), 2177--2191.


%\bibitem{Vdo1984} Vdovina E. V., Classification of singular points of the equation $y' =
%(a_0x^2 + a_1xy + a_2y^2)/(b_0x^2 + b_1xy + b_2y^2)$ by Forster's method (Russian),
%{\it Diff. Uravn.} {\bf 20} (1984), 1809--1813.

\bibitem{WZ2016}   L. Wei, X. Zhang,
Limit cycle bifurcations near generalized homoclinic loop in
piecewise smooth differential systems, {\it Discrete Contin. Dyn. Syst.} {\bf 36} (2016) 2803--2825.



%\bibitem{Vulp1989} Vulpe N. I., Sibirski K. S.,   Centro -affine invariant conditions for the existence of a center of a
%differential system with cubic nonlinearities, {\it Dokl. Akad. Nauk SSSR} {\bf 301} (1988), 1297--1301 (in Russian),
%translation in: {\it Soviet Math. Dokl.} {\bf 38} (1989), 198--201.

%\bibitem{Ye1986} Ye Y. Q., {\it Theory of Limit Cycles}, Trans. Math. Monographs {\bf 66}, Amer. Math. Soc. Providence, RI, 1984.
%
%\bibitem{Ye1995} Ye Y. Q., {\it Qualitative Theory of Polynomial Differential Systems}, Shanghai Science $\&$ Technology Pub., Shanghai, 1995.
%
%
%%\bibitem{Yosh1983} Yoshida H., Necessary conditions for existence of algebraic
%%first integrals I and II, {\it Celestial Mech.} {\bf 31} (1983), 363--379, 381--399.
%
%\bibitem{ZDHD} Zhang Z. F.,  Ding T. R.,  Huang W. Z. and  Dong Z. X.,
%{\it Qualitative Theory of Differential Equations}, Transl. Math.
%Monographs {\bf 101}, Amer. Math. Soc., Providence, 1992.

\bibitem {ZKB2006}  Y. Zou, T. Kupper, W.J. Beyn,
Generalized Hopf bifurcation for planar Filippov systems
continuous at the origin, {\it J. Nonlinear Science} {\bf 16} (2006),
159--177.




%
%16. Y, Liu, V. Romanovski, Limit cycle bifurcations in a class of piecewise smooth systems with a double homoclinic loop. Applied mathematics and computation, ISSN 0096-3003. [Print ed.], 2014, vol. 248, str. 235-245.
%



\end{thebibliography}
\end{document}